%%%%%%%%%%%%%%%%%%%%%%%%%%%%%%%%%%%%%%%%%%%%%%%%%%%%%%%%%%%%%%%%%%%%%%%%%%
%%%%%%%%%%%%%%%%%%%%%%%%%%%%%%%%%%%%%%%%%%%%%%%%%%%%%%%%%%%%%%%%%%%%%%%%%%%%
\documentclass[11pt,twoside]{article}
\usepackage{mathrsfs}
\usepackage{amsmath}
\usepackage{amsthm}
\usepackage{amsfonts}
\usepackage{amssymb}
\usepackage{color}
\usepackage{latexsym}
\usepackage[all]{xy}

\date{\empty}
\pagestyle{plain}
%%%%%%%%%%%%%%%%%%%%%%%%%%%%%%%%%%%%%%%%%%%%%%%%%%%%%%%%%%%%%%%%%%%%
\textheight= 21.6 true cm \textwidth =15 true cm
\allowdisplaybreaks[4] %\footskip=15pt

\topmargin=27pt \evensidemargin0pt \oddsidemargin0pt
%\headheight7pt
%\headsep12pt
%\marginparwidth0pt
%\marginparsep0pt
%\footskip0pt
%\footnotesep0pt
%%%%%%%%%%%%%%%%%%%%%%%%%%%%%%%%%%%%%%%%%%%%%%%%%%%%%%%%%%%%%%%%%%%
\numberwithin{equation}{section} \theoremstyle{plain}
\newtheorem*{thm*}{Main Theorem}
\newtheorem{theorem}{Theorem}[section]
\newtheorem{corollary}[theorem]{Corollary}
\newtheorem*{corollary*}{Corollary}

\newtheorem*{claim*}{Claim}
\newtheorem{lemma}[theorem]{Lemma}
\newtheorem*{lemma*}{Lemma}
\newtheorem{proposition}[theorem]{Proposition}
\newtheorem*{proposition*}{Proposition}
\newtheorem{remark}[theorem]{Remark}
\newtheorem*{remark*}{Remark}
\newtheorem{example}[theorem]{Example}
\newtheorem*{example*}{Example}

\newtheorem*{question*}{Question}
\newtheorem{definition}[theorem]{Definition}
\newtheorem*{definition*}{Definition}

%%%%%%%%%%%%%%%%%%%%%%%%%%%%%%%%%%%%%%
%%%%%%%%%%%%%%%%%%%%%%%%%%%%%%%%%%%%%%
\newtheorem*{acknowledgements*}{ACKNOWLEDGEMENTS}

% % % % % % % % % % % % % % % % % % % % % % % %
% Definitions made by J. Benitez

\newcommand{\on}{\mathbf{0}}
\newcommand{\uno}{\mathbf{1}}

\DeclareMathOperator{\rk}{rk}
\usepackage{eucal}
\newcommand{\rr}{\mathcal{R}}
\newcommand{\kk}{\mathcal{N}}
\renewcommand{\ss}{\mathcal{S}}
\newcommand{\core}[1]{#1^{\tiny{\textcircled{\tiny\#}}}}
\newcommand{\coep}[1]{#1^{\tiny{\textcircled{\tiny\dag}}}}

\newcommand{\mat}[4]{\left[ \begin{array}{cc} #1 & #2 \\ #3 & #4 \end{array} \right]}
\newcommand{\lie}[2]{\left[ \begin{array}{c}#1 \\ #2 \end{array} \right]}

\newcommand{\vv}[2]{\left[ \begin{array}{c} #1 \\ #2 \end{array} \right]}
\newcommand{\bra}[2]{\langle #1, #2 \rangle}
\newcommand{\pare}[2]{( #1, #2 )}
%%%%%%%%%%%%%%%%%%%%%%%%%%%%%%%%%%%%%%%%%%%%%%%

\begin{document}
\begin{center}
{\large \bf Generalized core inverses of matrices}

\vspace{0.4cm} {\small \bf Sanzhang Xu},
\footnote{ Sanzhang Xu (E-mail: xusanzhang5222@126.com): School of Mathematics, Southeast University, Nanjing 210096, China.}
\vspace{0.4cm} {\small \bf Jianlong Chen},
\footnote{ Jianlong Chen (Corresponding author: jlchen@seu.edu.cn): School of Mathematics, Southeast University, Nanjing 210096, China.}
\vspace{0.4cm} {\small \bf Julio Ben\'{\i}tez}
\footnote{Julio Ben\'{\i}tez (E-mail: jbenitez@mat.upv.es): Universitat Polit\`{e}cnica de Val\`{e}ncia, Instituto de Matem\'{a}tica Multidisciplinar, Valencia, 46022, Spain.}
\vspace{0.4cm} {\small \bf and Dingguo Wang}
\footnote{ Dingguo Wang (E-mail: dingguo95@126.com): School of Mathematical Sciences, Qufu Normal University, Qufu, 273165, China.}

\end{center}
\bigskip

{ \bf  Abstract:}  \leftskip0truemm\rightskip0truemm
In this paper, we introduce two new generalized inverses of matrices, namely,
the $\bra{i}{m}$-core inverse and the $\pare{j}{m}$-core inverse.
The $\bra{i}{m}$-core inverse of a complex matrix extends
the notions of the core inverse defined by Baksalary and Trenkler \cite{BT}
and the core-EP inverse defined by Manjunatha Prasad and Mohana \cite{MM}.
The $\pare{j}{m}$-core inverse of a complex matrix extends
the notions of the core inverse and the ${\rm DMP}$-inverse defined by Malik and Thome \cite{MT}.
Moreover, the formulae and properties of these two new concepts
are investigated by using matrix decompositions and matrix powers.

{ \textbf{Key words:}}  $\bra{i}{m}$-core inverse, $\pare{j}{m}$-core inverse, core inverse, DMP-inverse, core-EP inverse.

{ \textbf{AMS subject classifications:}}  15A09, 15A23.
 \bigskip

%%%%%%%%%%%%%%%%%%%%%%%%%%%%%%%%%%%%%%%%%%%%%%%%%%%%%%%%%%%%%%%%%%%%%
%%%%%%%%%%%%%%%%%%%%%    Section 1   %%%%%%%%%%%%%%%%%%%%%%%%%%%%
%%%%%%%%%%%%%%%%%%%%%%%%%%%%%%%%%%%%%%%%%%%%%%%%%%%%%%%%%%%%%%%%%%%%%
\section { \bf Introduction}
Let $\mathbb{C}^{m\times n}$ denote the set of all $m\times n$ complex matrices.
Let $A^{\ast}$, $\rr{(A)}$ and $\rk(A)$ denote the conjugate
transpose, column space, and rank of $A\in \mathbb{C}^{m\times n}$, respectively.
For $A\in \mathbb{C}^{m\times n}$, if $X\in \mathbb{C}^{n\times m}$ satisfies $AXA=A$, $XAX=X$, $(AX)^*=AX$ and $(XA)^*=XA$,
then $X$ is called a {\em Moore-Penrose inverse} of $A$.
This matrix $X$ is unique and denoted by $A^{\dagger}$.
A matrix $X\in \mathbb{C}^{n\times m}$ is called an {\em outer inverse} of $A$ if it satisfies $XAX=X$;
is called a {\em $\{2,3\}$-inverse} of $A$ if it satisfies $XAX=X$ and $(AX)^{\ast}=AX$;
is called a {\em $\{1,3\}$-inverse} of $A$ if it satisfies $AXA=A$ and $(AX)^{\ast}=AX$;
is called a {\em $\{1,2,3\}$-inverse} of $A$ if it satisfies $AXA=A$, $XAX=X$ and $(AX)^{\ast}=AX$.

The core inverse of a complex matrix was introduced by Baksalary and Trenkler \cite{BT}.
Let $A\in \mathbb{C}^{n\times n}$.
A matrix $X\in\mathbb{C}^{n\times n}$ is called a {\em core inverse} of $A$, if it satisfies
$AX=P_{A}$ and $\mathcal{R}(X)\subseteq \mathcal{R}(A)$, here $P_{A}$ denotes the orthogonal projector onto $\mathcal{R}(A)$.
If such a matrix exists, then it is unique and denoted by $A^{\tiny\textcircled{\tiny\#}}$.
For a square complex matrix $A$, one has that $A$ is core invertible, $A$ is group invertible, and
${\rm rk}(A)={\rm rk}(A^2)$ are three equivalent conditions (see [1]). We denote $\mathbb{C}^{CM}_{n}=\{A\in \mathbb{C}^{n\times n}\mid \rk(A)=\rk(A^{2})\}$.

Let $A\in \mathbb{C}^{n\times n}$. A matrix $X\in \mathbb{C}^{n\times n}$ such that
$XA^{k+1}=A^{k}$, $XAX=X$ and $AX=XA$ is called the
{\em Drazin inverse} of $A$ and denoted by $A^{D}$. The smallest integer $k$ is
called the Drazin index of $A$, denoted by ${\rm ind}(A)$. Any square matrix with finite index is Drazin invertible.
If ${\rm ind}(A)\leq 1$, then the Drazin inverse of $A$ is called the {\em group inverse} and denoted by $A^\#.$

The DMP-inverse for a complex matrix was introduced by Malik and Thome \cite{MT}.
Let $A\in \mathbb{C}^{n\times n}$ with ${\rm ind}(A)=k$. A matrix $X\in \mathbb{C}^{n\times n}$ is called a
{\em DMP-inverse} of $A$, if it satisfies
$XAX=X$, $XA=A^{D}A$ and $A^{k}X=A^{k}A^{\dagger}.$
If such a matrix $X$ exists, then it is unique and denoted by $A^{D,\dagger}$.
Malik and Thome gave several characterizations of the DMP-inverse by using the decomposition of Hartwig and Spindelb\"{o}ck \cite{HS}.

The notion of the core-EP inverse for a complex matrix was introduced by Manjunatha Prasad and Mohana \cite{MM}.
A matrix $X\in \mathbb{C}^{n\times n}$ is a {\em  core-EP inverse} of $A\in \mathbb{C}^{n\times n}$ if $X$ is an outer
inverse of $A$ satisfying $\rr{(X)} = \rr{(X^{\ast})} = \rr{(A^{k})}$,
where $k$ is the index of $A$. If such a matrix $X$ exists, then it is unique and denoted by $\coep{A}$.

In addition, $\uno_n$ and $\on_n$ will denote the $n\times 1$ column vectors all of whose components
are $1$ and $0$, respectively. $0_{m\times n}$ (abbr. $0$) denotes the zero matrix of size $m\times n$.
If $\ss$ is a subspace of $\mathbb{C}^{n}$, then $P_{\ss}$ stands for the {\em orthogonal projector} onto the
subspace $\ss$.   A matrix $A\in \mathbb{C}^{n\times n}$ is called an {\em EP} matrix if $\rr{(A)}=\rr{(A^{\ast})}$,
$A$ is called {\em Hermitian} if $A^{\ast}=A$ and $A$ is {\em unitary} if $AA^{\ast}=I_{n},$
where $I_{n}$ denote the {\em identity matrix} of size $n.$
Let $\mathbb{N}$ denotes the set of positive integers.

%%%%%%%%%%%%%%%%%%%%%%%%%%%%%%%%%%%%%%%%%%%%%%%%%%%%%%%%%%%%%%%%%%%%%%%%%%%%%%%%%%%%%%%%%%%%%%%%%%%%%%%%%%%%%%%%%%%%%%%%%%%%%%%%%%%%%%%%%%%%%%%%%%%%%%%%%%%%%%%%%%%%%%%%%%%%%%%%%%%%
%%%%%%%%%%%%%%%%%%%%%%%%%%%%%%%%%%%%%%%%%%%%%%%%%%%%%%%%%%%%%%%%%%%%%%%%%%%%%%%%%%%%%%%%%%%%%%%%%%%%%%%%%%%%%%%%%%%%%%%%%%%%%%%%%%%%%%%%%%%%%%%%%%%%%%%%%%%%%%%%%%%%%%%%%%%%%%%%%%%%
%%%%%%%%%%%%%%%%%%%%%%%%%%%%%%%%%%%%%%%%%%%%%%%%%%%%%%%%%%%%%%%%%%%%%%%%%%%%%%%%%%%%%%%%%%%%%%%%%%%%%%%%%%%%%%%%%%%%%%%%%%%%%%%%%%%%%%%%%%%%%%%%%%%%%%%%%%%%%%%%%%%%%%%%%%%%%%%%%%%%
%%%%%%%%%%%%%%%%%%%%%%%%%%%%%%%%%%%%%%%%%%%%%%%%%%%%%%%%%%%%%%%%%%%%%%%%%%%%%%%%%%%%%%%%%%%%%%%%%%%%%%%%%%%%%%%%%%%%%%%%%%%%%%%%%%%%%%%%%%%%%%%%%%%%%%%%%%%%%%%%%%%%%%%%%%%%%%%%%%%%
\section { \bf Preliminaries}

A related decomposition of the matrix decomposition of Hartwig and Spindelb\"{o}ck \cite{HS} was given in \cite[Theorem 2.1]{B} by Ben\'{\i}tez, in \cite{BL} it can be found a simpler proof of this decomposition.
Let us start this section with the concept of principal angles.

\begin{definition}\emph{\cite{W}} \label{pa}
Let $\ss_1$ and $\ss_2$ be two nontrivial subspaces of
$\mathbb{C}^{n}$.
We define the {\em principal angles} $\theta_1, \ldots, \theta_r \in [0,\pi/2]$ between
$\ss_1$ and $\ss_2$ by
$$
\cos\theta_i = \sigma_i(P_{\ss_1}P_{\ss_2}),
$$
for $i=1, \ldots, r$, where $r = \min\{ \dim \ss_1, \dim \ss_2\}$. The real numbers $\sigma_i (P_{\ss_1}P_{\ss_2}) \geq 0$
are the singular values of $P_{\ss_1}P_{\ss_2}$.
\end{definition}

The following theorem can be found in \cite[Theorem 2.1]{B}.
\begin{theorem}  \label{CS}
Let $A\in \mathbb{C}^{n\times n}$, $r = \rk(A)$, and let $\theta_1, \dots, \theta_p$ be the principal
angles between $\rr(A)$ and $\rr(A^*)$ belonging to $]0,\pi/2[$. Denote by $x$ and $y$
the multiplicities of the angles $0$ and $\pi/2$ as a canonical angle between $\rr(A)$
and $\rr(A^*)$, respectively. There exists a unitary matrix $U \in \mathbb{C}^{n\times n}$ such that
\begin{equation}\label{cs}
A = U \left[ \begin{array}{cc} M C & M S \\ 0 & 0 \end{array} \right]U^*,
\end{equation}
where $M \in \mathbb{C}^{r\times r}$ is nonsingular,
$$C = {\rm diag}(\on_y, \cos \theta_1, \ldots, \cos \theta_p, \uno_x),$$
\begin{equation*}
S = \left[ \begin{array}{cc}
{\rm diag} (\uno_y, \sin \theta_1, \ldots, \sin \theta_p) & 0_{p+y,n-(r+p+y)}
 \\ 0_{x,p+y} & 0_{x,n-(r+p+y)}
\end{array} \right],
\end{equation*}
and $r=y+p+x$. Furthermore, $x$ and $y+n-r$ are the multiplicities of the singular
values 1 and 0 in $P_{\rr(A)}P_{\rr(A^*)}$, respectively.
% We call $(\ref{cs})$ as the CS decomposition of $A$.
\end{theorem}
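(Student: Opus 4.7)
The plan is to derive (\ref{cs}) by starting from a Hartwig--Spindelb\"{o}ck-style decomposition of $A$ and then refining the unitary so that the ``partial isometry'' block is split into the cosine/sine blocks that carry the principal angle data.

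\textbf{Step 1 (H--S-type reduction).} I would first pick a unitary $U_0 \in \mathbb{C}^{n \times n}$ whose first $r$ columns form an orthonormal basis of $\rr(A)$; then $U_0^* A U_0 = \left[\begin{array}{c} T \\ 0 \end{array}\right]$ with $T \in \mathbb{C}^{r \times n}$ of full row rank. A polar-type factorization $T = M Z$, with $M \in \mathbb{C}^{r\times r}$ nonsingular and $Z = [K \mid L]$ (with $K \in \mathbb{C}^{r\times r}$) satisfying $Z Z^* = I_r$, i.e.\ $K K^* + L L^* = I_r$, puts $A$ in the Hartwig--Spindelb\"{o}ck form.

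\textbf{Step 2 (principal angles from $K$).} In the $U_0$-basis one has $P_{\rr(A)} = \mathrm{diag}(I_r, 0)$ while $P_{\rr(A^*)} = Z^* Z$, since the columns of $Z^*$ form an orthonormal basis of the image of $\rr(A^*)$ under $U_0^*$. A direct block computation gives $P_{\rr(A)} P_{\rr(A^*)} (P_{\rr(A)} P_{\rr(A^*)})^* = \mathrm{diag}(K^* K, 0)$, so the $r$ nonzero singular values of $P_{\rr(A)} P_{\rr(A^*)}$ equal those of $K$; by Definition~\ref{pa} these are the cosines of the principal angles, which also settles the multiplicity assertion and yields $r = y + p + x$. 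Sorting, take the SVD $K = V_1 C V_2^*$ with $V_1, V_2 \in \mathbb{C}^{r\times r}$ unitary and $C = \mathrm{diag}(\on_y, \cos\theta_1, \ldots, \cos\theta_p, \uno_x)$.

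\textbf{Step 3 (refining the right-hand block).} I would then conjugate the H--S form by a block-diagonal unitary $\mathrm{diag}(V_2, V_3)$ with $V_3 \in \mathbb{C}^{(n-r)\times(n-r)}$ still to be chosen. Using $K V_2 = V_1 C$, the top-left block becomes $M' C$ and the top-right becomes $M'(V_1^* L V_3)$, where $M' := V_2^* M V_1$ is nonsingular. It remains to produce $V_3$ with $V_1^* L V_3 = S$. From $L L^* = I_r - K K^*$ one obtains $(V_1^* L)(V_1^* L)^* = I_r - C^2 = \mathrm{diag}(\uno_y, \sin^2\theta_1, \ldots, \sin^2\theta_p, \on_x)$. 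This forces the last $x$ rows of $V_1^* L$ to vanish and the first $y + p$ rows to be pairwise orthogonal with norms $1, \sin\theta_1, \ldots, \sin\theta_p$. Normalizing these rows gives $y + p$ orthonormal vectors in $\mathbb{C}^{n-r}$, which I would extend to an orthonormal basis and place as the rows of $V_3^*$; a brief block calculation then yields $V_1^* L V_3 = S$, and $U := U_0\,\mathrm{diag}(V_2, V_3)$ finishes the proof.

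\textbf{Main obstacle.} The only nontrivial point is the simultaneous reduction in Step~3: the SVD chosen in Step~2 diagonalizes only $K$, and one has to exploit the rigidity of $K K^* + L L^* = I_r$ to show that $V_1^* L$ already has the required block structure up to a single unitary on the right. This is, in essence, the content of the CS decomposition for the pair of subspaces $\rr(A)$ and $\rr(A^*)$.
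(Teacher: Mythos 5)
Your argument is correct: the block computation giving $P_{\rr(A)}P_{\rr(A^*)}(P_{\rr(A)}P_{\rr(A^*)})^* = \operatorname{diag}(K^*K,0)$ correctly identifies the singular values of $K$ with the cosines of the principal angles, and the identity $KK^*+LL^*=I_r$ does force $V_1^*L$ to have orthogonal rows of norms $1,\dots,1,\sin\theta_1,\dots,\sin\theta_p,0,\dots,0$, so the unitary $V_3$ you build yields $V_1^*LV_3=S$. Note that the paper does not prove this theorem but imports it from \cite{B}, pointing to \cite{BL} for a short proof; that short proof proceeds exactly as you do (Hartwig--Spindelb\"{o}ck form, SVD of the block $K$, then exploiting $KK^*+LL^*=I_r$), so your route is essentially the standard one.
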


In this decomposition, one has $C^2 + SS^*=I_r$.
Recall that $A^\dagger$ always exists. We have that $A^\#$ exists if and only if $C$ is nonsingular in view of \cite[Theorem~3.7]{B}.
The following equalities hold
$$
A^\dag = U \mat{CM^{-1}}{0}{S^* M^{-1}}{0}U^*, \quad
A^\# = U \mat{C^{-1}M^{-1}}{C^{-1}M^{-1}C^{-1}S}{0}{0}U^*.
$$
By \cite[Theorem 2]{BL}, we have that
\begin{equation} \label{csdrazinforl}
A^{D}= U \mat{(MC)^{D}}{[(MC)^{D}]^{2}MS}{0}{0}U^*.
\end{equation}
We also have
\begin{equation}    \label{daacore}
AA^{\dagger} = U \mat{I_{r}}{0}{0}{0}U^*,
\end{equation}
\begin{equation} \label{corecsforma}
\core{A} = A^\#AA^\dag = U \mat{C^{-1}M^{-1}}{0}{0}{0}U^*.
\end{equation}

%In \cite[Theorem 3.1]{XCZ}, the authors proved the following lemma for an element in a ring with involution.
\begin{lemma} \emph{\cite[Theorem 3.1]{XCZ}} \label{threecore}
Let $A\in \mathbb{C}^{n\times n}$. Then $A$ is core invertible if and only if there exists $X\in \mathbb{C}^{n\times n}$ such that $(AX)^{\ast}=AX$, $XA^{2}=A$ and $AX^{2}=X$.
In this situation, we have $A^{\tiny{\textcircled{\tiny\#}}}=X$.
\end{lemma}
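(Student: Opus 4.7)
The plan is to establish the two implications separately. For the \emph{necessity}, assume $A$ is core invertible and set $X = \core{A}$. I would invoke the canonical decomposition of Theorem~\ref{CS}, in which $A = U \mat{MC}{MS}{0}{0} U^*$; since core invertibility coincides with group invertibility, $C$ is nonsingular and \eqref{corecsforma} gives $\core{A} = U \mat{C^{-1}M^{-1}}{0}{0}{0} U^*$. A direct $2\times 2$ block multiplication then shows $AX = U \mat{I_r}{0}{0}{0} U^*$ (which is visibly Hermitian), together with $XA^2 = A$ and $AX^2 = X$, so all three identities hold.

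For the \emph{sufficiency}, suppose $X$ satisfies $(AX)^* = AX$, $XA^2 = A$, and $AX^2 = X$. First I would deduce from $A = XA^2$ the chain $\rk(A) \le \rk(A^2) \le \rk(A)$, which forces $\rk(A) = \rk(A^2)$ and hence $A \in \mathbb{C}^{CM}_n$, so $\core{A}$ exists. Next I would carry out a range-space matching: $AX^2 = X$ gives $\rr(X) \subseteq \rr(A)$, while $XA^2 = A$ gives $\rr(A) \subseteq \rr(X)$, so $\rr(X) = \rr(A)$.

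It then remains to show $AX = P_A$; combined with $\rr(X) \subseteq \rr(A)$, this will give $X = \core{A}$ by the defining conditions of the core inverse recalled in the introduction. Hermiticity of $AX$ is given. The identity $AX \cdot X = X$ says $AX$ fixes $\rr(X) = \rr(A)$ pointwise, which forces $\rr(AX) = \rr(A)$ (the reverse inclusion is automatic). Hermiticity of $AX$ then yields $\ker(AX) = \rr(AX)^\perp = \rr(A)^\perp$, so $AX$ acts as the identity on $\rr(A)$ and as zero on $\rr(A)^\perp$, i.e., $AX = P_A$.

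The main subtlety lies in this last step: rather than attempting to verify $(AX)^2 = AX$ by direct algebraic manipulation of the three identities, one exploits self-adjointness of $AX$ together with the range coincidence $\rr(X) = \rr(A)$ to identify $AX$ as the orthogonal projector onto $\rr(A)$.
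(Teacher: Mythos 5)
Your argument is correct, but note that the paper itself gives no proof of this lemma: it is imported verbatim from \cite[Theorem 3.1]{XCZ}, so there is no in-paper proof to compare against. What you have written is a valid self-contained proof. The necessity direction is a routine block computation with the decomposition of Theorem~\ref{CS} and \eqref{corecsforma}, and it checks out. The sufficiency direction is the substantive part, and your route is sound: $XA^2=A$ forces $\rk(A)=\rk(A^2)$; $AX^2=X$ and $XA^2=A$ together give $\rr(X)=\rr(A)$; then $(AX)X=X$ shows that the Hermitian matrix $AX$ fixes $\rr(X)=\rr(A)$ pointwise, so $\rr(AX)=\rr(A)$, and Hermiticity gives $\kk(AX)=\rr(AX)^{\bot}=\rr(A)^{\bot}$, whence $AX=P_{A}$. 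Combined with $\rr(X)\subseteq\rr(A)$ and the uniqueness of the core inverse recalled in the introduction, this yields $X=\core{A}$. This is a more geometric argument than the purely equational one in \cite{XCZ} (which works in rings with involution and verifies the defining identities algebraically); what your version buys is brevity and transparency over $\mathbb{C}$, at the cost of using orthogonal decomposition of $\mathbb{C}^{n}$, so it would not transfer to the ring-theoretic setting of the cited source. One cosmetic remark: in the sufficiency direction the preliminary rank argument establishing that $\core{A}$ exists is not strictly needed, since exhibiting $X$ with $AX=P_{A}$ and $\rr(X)\subseteq\rr(A)$ already witnesses core invertibility by definition.
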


\begin{lemma}\label{twedlema}
Let $A\in \mathbb{C}^{n\times n}$. If there exists $X\in \mathbb{C}^{n\times n}$ such that $AX^{k+1}=X^{k}$ and $XA^{k+1}=A^{k}$ for some $k\in \mathbb{N}$,
then  for $m\in\mathbb{N}$ we have
\begin{itemize}
\item[{\rm (1)}] $A^{k}=X^{m}A^{k+m}$;
\item[{\rm (2)}] $X^{k}=A^{m}X^{k+m}$;
\item[{\rm (3)}] $A^{k}X^{k}=A^{k+m}X^{k+m}$;
\item[{\rm (4)}] $X^{k}A^{k}=X^{k+m}A^{k+m}$;
\item[{\rm (5)}] $A^{k}=A^{m}X^{m}A^{k}$;
\item[{\rm (6)}] $X^{k}=X^{m}A^{m}X^{k}$.
\end{itemize}
\end{lemma}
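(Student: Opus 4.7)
The plan is to prove parts (1), (3), (5) directly, and then obtain (2), (4), (6) by the symmetry that exchanges $A$ with $X$: this substitution swaps the two hypotheses $XA^{k+1}=A^k$ and $AX^{k+1}=X^k$, and simultaneously turns each odd-numbered identity into its even-numbered counterpart.

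For (1), I would induct on $m\geq 1$. The base case $m=1$ is exactly the hypothesis $XA^{k+1}=A^k$. For the inductive step, assuming $X^mA^{k+m}=A^k$, the computation
\[
X^{m+1}A^{k+m+1}=X^m\bigl(XA^{k+1}\bigr)A^m=X^mA^kA^m=X^mA^{k+m}=A^k
\]
closes the induction. Part (3) is handled in the same style: assuming $A^{k+m}X^{k+m}=A^kX^k$,
\[
A^{k+m+1}X^{k+m+1}=A^{k+m}\bigl(AX^{k+1}\bigr)X^m=A^{k+m}X^{k+m}=A^kX^k.
\]

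For (5), I will combine (1) and (2). Applying (1) with parameter $k$ in place of $m$ gives the auxiliary identity $A^k=X^kA^{2k}$; substituting this for the right-most $A^k$ yields
\[
A^mX^mA^k=A^mX^m\bigl(X^kA^{2k}\bigr)=A^mX^{m+k}A^{2k},
\]
and (2) with its parameter $m$ collapses $A^mX^{m+k}$ to $X^k$, so
\[
A^mX^mA^k=X^kA^{2k}=A^k,
\]
the final equality being $A^k=X^kA^{2k}$ once more. Item (6) is obtained from the same chain with the roles of $A$ and $X$ reversed, using (2) with parameter $k$ to write $X^k=A^kX^{2k}$ and (1) with parameter $m$ to contract $X^mA^{m+k}=A^k$.

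I anticipate no serious obstacle: parts (1)--(4) are routine inductions that peel off one factor of the relevant hypothesis at a time, and the only mildly clever move needed for (5)--(6) is the opening substitution that rewrites $A^k$ as $X^kA^{2k}$ (respectively $X^k$ as $A^kX^{2k}$), so that the middle block becomes the exact combination $A^mX^{k+m}$ (respectively $X^mA^{k+m}$) already controlled by (2) (respectively (1)).
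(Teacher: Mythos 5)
Your proof is correct and follows essentially the same route as the paper: induction for (1) and (3), the $A\leftrightarrow X$ symmetry for the even-numbered items, and the opening substitution $A^k=X^kA^{2k}$ for (5). The only (harmless) difference is that in (5) you collapse $A^mX^{k+m}$ to $X^k$ by citing (2), whereas the paper telescopes $m$ applications of $AX^{k+1}=X^k$ directly; your version is slightly tidier.
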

\begin{proof}
$(1)$. For $m = 1$, it is clear by the hypotheses. If the formula is true for $m\in \mathbb{N}$, then
$X^{m+1}A^{k+m+1} =XX^{m}A^{k+m}A = XA^{k}A = XA^{k+1} = A^{k}$.

$(3)$. It is easy to check that $A^{k}X^{k}=A^{k+1}X^{k+1}$ by $AX^{k+1}=X^{k}$. It is not difficult to check the equality $A^{k}X^{k}=A^{k+m}X^{k+m}$ by induction.

$(5)$. From $(1)$ we have $A^k=X^kA^{2k}$. Thus by $AX^{k+1}=X^{k}$, we have
$A^k=X^kA^{2k}=AX^{k+1}A^{2k}=AX^kXA^{2k}=A(AX^{k+1})XA^{2k}=A^2X^{k+2}A^{2k}=A^2X^2X^kA^{2k}
=\cdots=A^mX^mX^kA^{2k}=A^mX^mA^k$.

The proofs of $(2)$, $(4)$ and $(6)$ are similar to the proofs of $(1)$, $(3)$ and $(5)$, respectively.
\end{proof}

\begin{lemma}\label{drazinlema}
Let $A\in \mathbb{C}^{n\times n}$. If there exists $X\in \mathbb{C}^{n\times n}$ such that $AX^{k+1}=X^{k}$ and $XA^{k+1}=A^{k}$ for some $k\in \mathbb{N}$,
then $A^{D}=X^{k+1}A^{k}$.
\end{lemma}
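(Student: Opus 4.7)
The plan is to set $Y := X^{k+1}A^{k}$ and verify directly that $Y$ satisfies the three defining equations of the Drazin inverse as stated in the introduction, namely $YA^{k+1}=A^{k}$, $YAY=Y$, and $AY=YA$. Since the Drazin inverse is unique whenever it exists, this simultaneously establishes that $A$ is Drazin invertible with index at most $k$, and that $A^{D}=Y$.

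First I would check $YA^{k+1}=A^{k}$. This is the easiest step: $YA^{k+1}=X^{k+1}A^{k}A^{k+1}=X^{k+1}A^{2k+1}$, which equals $A^{k}$ by Lemma \ref{twedlema}(1) applied with $m=k+1$. Next I would check commutativity. On one side, $AY=AX^{k+1}A^{k}=X^{k}A^{k}$ by the hypothesis $AX^{k+1}=X^{k}$; on the other, $YA=X^{k+1}A^{k+1}$, and Lemma \ref{twedlema}(4) (specifically the instance $X^{k}A^{k}=X^{k+1}A^{k+1}$) identifies the two.

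The most substantive of the three verifications is $YAY=Y$. I would expand
$$YAY=X^{k+1}A^{k+1}X^{k+1}A^{k}$$
and collapse the middle block $A^{k+1}X^{k+1}$ to $A^{k}X^{k}$ using Lemma \ref{twedlema}(3); then the identity $A^{k}X^{k}A^{k}=A^{k}$ from Lemma \ref{twedlema}(5) (with $m=k$) reduces the expression to $X^{k+1}A^{k}=Y$.

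The main obstacle is really bookkeeping rather than any deep idea: one must pick precisely the right instances of the six sub-identities of Lemma \ref{twedlema} so that every $A^{k+1}X^{k+1}$ or $X^{k+1}A^{2k+1}$ that appears can be matched to a known simplification. Once that correspondence is made, each of the three Drazin axioms follows by one or two substitutions, and the conclusion $A^{D}=X^{k+1}A^{k}$ follows from the uniqueness of the Drazin inverse.
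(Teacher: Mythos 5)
Your proposal is correct and follows essentially the same route as the paper: both set $Y=X^{k+1}A^{k}$ and verify the three Drazin axioms $YA^{k+1}=A^{k}$, $AY=YA$, $YAY=Y$ by invoking the hypotheses together with suitable instances of Lemma~\ref{twedlema}, concluding by uniqueness of the Drazin inverse. The only difference is which of the six sub-identities you cite at each step (e.g.\ you use (3) and (5) where the paper uses (1), (4) and the already-established commutativity), which is immaterial bookkeeping.
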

\begin{proof}
Since $A$ is Drazin invertible by $XA^{k+1}=A^{k}$, we will check that $A^{D}=X^{k+1}A^{k}$.
Have in mind, $AX^{k+1}=X^{k}$ and $XA^{k+1}=A^{k}$, thus
\begin{equation} \label{drazinllla}
A(X^{k+1}A^{k})=X^{k}A^{k}=X^{k}(XA^{k+1})=X^{k+1}A^{k}A.
\end{equation}
That is, $X^{k+1}A^{k}$ and $A$ are commute. Then by $(1)$ and $(4)$ in Lemma \ref{twedlema}, we have that
\begin{equation} \label{drazinlllb}
\begin{split}
(X^{k+1}A^{k})A(X^{k+1}A^{k})=&X^{k+1}A^{k+1}X^{k+1}A^{k}=X^{k}A^{k}(X^{k+1}A^{k})\\
=&X^{k}X^{k+1}A^{k}A^{k}=X^{k+1}X^{k}A^{2k}=X^{k+1}A^{k}.
\end{split}
\end{equation}
From $(1)$ in Lemma \ref{twedlema}, we have that
\begin{equation} \label{drazinlllc}
(X^{k+1}A^{k})A^{k+1}=X(X^{k}A^{2k})A=XA^{k+1}=A^{k}.
\end{equation}
Thus we have $A^{D}=X^{k+1}A^{k}$ by the definition of the Drazin inverse and in view of $(\ref{drazinllla})$, $(\ref{drazinlllb})$ and $(\ref{drazinlllc})$.
\end{proof}

\begin{remark}
From the proofs of Lemma~$\ref{twedlema}$ and Lemma~$\ref{drazinlema}$, it is obvious that
Lemma~$\ref{twedlema}$ and Lemma~$\ref{drazinlema}$ are valid for rings.
Moreover, we can get that for an element $a\in R$, $a$ is Drazin invertible if and only if
there exist $x\in R$ and $k\in \mathbb{N}$ such that $ax^{k+1}=x^{k}$ and $xa^{k+1}=a^{k}$, where $R$ is a ring.
\end{remark}

The following lemma is similar to \cite[Theorem 2.5]{MT}. %by using the decomposition of Hartwig and Spindelb\"{o}ck \cite{HS}.
\begin{lemma} \label{coepcsfoemula}
Let $A\in \mathbb{C}^{n\times n}$ be the form \emph{(\ref{cs})}. Then
\begin{equation} \label{usecsformula}
A^{D,\dagger}=U\mat{(MC)^{D}}{0}{0}{0}U^{\ast}.
\end{equation}
\end{lemma}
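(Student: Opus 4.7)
The plan is to verify the three defining conditions of the DMP-inverse directly for the candidate
$$
X := U\mat{(MC)^{D}}{0}{0}{0}U^{\ast},
$$
namely $XAX=X$, $XA=A^{D}A$, and $A^{k}X=A^{k}A^{\dagger}$, where $k=\mathrm{ind}(A)$.

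First, I would carry out the block multiplication using the decomposition (\ref{cs}) to obtain $XA=U\mat{(MC)^{D}MC}{(MC)^{D}MS}{0}{0}U^{\ast}$. Post-multiplying by $X$ and invoking the standard Drazin identity $(MC)^{D}(MC)(MC)^{D}=(MC)^{D}$ immediately gives $XAX=X$. The very same block computation, combined with formula (\ref{csdrazinforl}) for $A^{D}$, shows that $A^{D}A$ produces exactly the same matrix, so the identity $XA=A^{D}A$ is automatic.

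The substantive step is $A^{k}X=A^{k}A^{\dagger}$. A straightforward induction on $j$ gives
$$
A^{j}=U\mat{(MC)^{j}}{(MC)^{j-1}MS}{0}{0}U^{\ast}
$$
for every $j\geq 1$. Multiplying $A^{k}$ by $X$ leaves $(MC)^{k}(MC)^{D}$ in the $(1,1)$ block and zeros elsewhere, while computing $A^{k}A^{\dagger}$ with the known formula for $A^{\dagger}$ produces $(MC)^{k-1}M[C^{2}+SS^{\ast}]M^{-1}$ in the $(1,1)$ block, which collapses to $(MC)^{k-1}$ thanks to the identity $C^{2}+SS^{\ast}=I_{r}$ recorded right after Theorem~\ref{CS}.

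The main obstacle is reconciling these two expressions, i.e., showing $(MC)^{k}(MC)^{D}=(MC)^{k-1}$. For this I would compare ranks of powers: since $M$ is invertible and the $r\times n$ matrix $[C\ \, S]$ has full row rank $r$ (again a consequence of $C^{2}+SS^{\ast}=I_{r}$), one obtains $\rk(A^{j})=\rk((MC)^{j-1})$ for every $j\geq 1$. Hence $\mathrm{ind}(A)=k$ forces $\mathrm{ind}(MC)\leq k-1$, and the required Drazin identity $(MC)^{k}(MC)^{D}=(MC)^{k-1}$ then follows, completing the verification.
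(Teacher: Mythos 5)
Your proof is correct, but it takes a genuinely different route from the paper. The paper's argument is a two-line computation: it invokes the closed formula $A^{D,\dagger}=A^{D}AA^{\dagger}$ from Malik and Thome, substitutes the block forms $A^{D}=U\mat{(MC)^{D}}{[(MC)^{D}]^{2}MS}{0}{0}U^{*}$ from (\ref{csdrazinforl}) and $AA^{\dagger}=U\mat{I_{r}}{0}{0}{0}U^{*}$ from (\ref{daacore}), and multiplies. You instead verify the three defining conditions $XAX=X$, $XA=A^{D}A$, $A^{k}X=A^{k}A^{\dagger}$ directly for the candidate, which makes the argument self-contained (it does not presuppose that $A^{D}AA^{\dagger}$ is the DMP-inverse) at the cost of one substantive extra step: establishing $(MC)^{k}(MC)^{D}=(MC)^{k-1}$. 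Your rank argument for that step is sound --- $[C\ \,S]$ has full row rank $r$ since $[C\ \,S][C\ \,S]^{*}=C^{2}+SS^{*}=I_{r}$, so $\rk(A^{j})=\rk((MC)^{j-1})$ for all $j\geq 1$ and hence ${\rm ind}(MC)\leq k-1$ --- and it is in fact an alternative to the derivation the paper gives later inside the proof of Theorem~\ref{csfoemulajj}, where the same inequality ${\rm ind}(MC)\leq k-1$ is obtained by postmultiplying the identity $A^{D}A^{k+1}=A^{k}$ by $\vv{C}{S^{*}}$. Both approaches buy something: the paper's is shorter and leans on cited results; yours re-proves the lemma from the axioms and incidentally establishes a fact (${\rm ind}(MC)\leq k-1$) that the paper needs anyway further on.
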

\begin{proof}
By (\ref{csdrazinforl}) and (\ref{daacore}), we can get
$A^{D}= U \mat{(MC)^{D}}{[(MC)^{D}]^{2}MS}{0}{0}U^*$ and $AA^{\dagger} = U \mat{I_{r}}{0}{0}{0}U^*$, respectively.
Thus by the definition of DMP-inverse we have
\begin{equation*}
%\begin{split}
A^{D,\dagger}=A^{D}AA^{\dagger}=U \mat{(MC)^{D}}{[(MC)^{D}]^{2}MS}{0}{0}\mat{I_{r}}{0}{0}{0}U^*
=U\mat{(MC)^{D}}{0}{0}{0}U^*.
%\end{split}
\end{equation*}
\end{proof}

\begin{lemma} \emph{\cite[Corollary 3.3]{WA}}\label{procorp}
Let $A\in \mathbb{C}^{n\times n}$ be a matrix of index $k$. Then $A\coep{A}=A^{k}(A^{k})^{\dagger}.$
\end{lemma}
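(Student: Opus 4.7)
The plan is to identify both sides of the identity with the orthogonal projector $P_{\rr(A^k)}$ onto $\rr(A^k)$. Since $BB^\dagger$ is always the orthogonal projector onto $\rr(B)$, the right-hand side $A^k(A^k)^\dagger$ is immediately equal to $P_{\rr(A^k)}$. The task thus reduces to showing that $X := \coep{A}$ satisfies $AX = P_{\rr(A^k)}$, or equivalently that $AX$ is an idempotent with range $\rr(A^k)$ and kernel $\rr(A^k)^\perp$.

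Idempotency is immediate from the outer-inverse property: $(AX)^2 = A(XAX) = AX$. For the range, the identity $\rr(X) = \rr(A^k)$ lets us write $X = A^k L$ for some $L$, whence $AX = A^{k+1}L$ and $\rr(AX) \subseteq \rr(A^{k+1}) = \rr(A^k)$, the last equality coming from ${\rm ind}(A)=k$. Conversely, $XAX = X$ forces $\rk(X) \leq \rk(AX) \leq \rk(X)$, so $\rk(AX) = \rk(X) = \rk(A^k)$ and the inclusion must be an equality.

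The main obstacle is pinning down the kernel, and for this I would prove $\kk(AX) = \kk(X)$. The containment $\kk(X) \subseteq \kk(AX)$ is trivial. For the reverse, if $AXv = 0$ then $Xv \in \rr(X) \cap \kk(A) = \rr(A^k) \cap \kk(A)$; but ${\rm ind}(A) = k$ means $\rr(A^k) = \rr(A^{k+1})$, so the restriction $A|_{\rr(A^k)}\colon \rr(A^k) \to \rr(A^{k+1}) = \rr(A^k)$ is a surjection of equal-dimensional spaces, hence injective, forcing $Xv = 0$. Combined with the standard identity $\kk(X) = \rr(X^*)^\perp$ and the defining condition $\rr(X^*) = \rr(A^k)$ of the core-EP inverse, this yields $\kk(AX) = \rr(A^k)^\perp$, completing the identification of $AX$ with $P_{\rr(A^k)} = A^k(A^k)^\dagger$.
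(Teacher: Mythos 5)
Your proof is correct. Note, however, that the paper does not prove this lemma at all: it is quoted verbatim from \cite[Corollary 3.3]{WA}, so there is no in-paper argument to compare against. What you have supplied is a valid, self-contained verification working directly from the defining properties of the core-EP inverse: identifying $A^k(A^k)^\dagger$ with $P_{\rr(A^k)}$ is standard; the idempotency of $A\coep{A}$ follows from the outer-inverse condition; the range computation via $\coep{A}=A^kL$ together with the rank chain $\rk(X)\le\rk(AX)\le\rk(X)$ is sound; and the kernel argument correctly uses that $\rr(A^k)\cap\kk(A)=\{0\}$ when ${\rm ind}(A)=k$ to get $\kk(A\coep{A})=\kk(\coep{A})=\rr((\coep{A})^*)^\perp=\rr(A^k)^\perp$. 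Since an idempotent is determined by its range and null space, the conclusion follows. The only cosmetic remark is that the source \cite{WA} derives this from the core-EP decomposition, whereas your argument avoids any decomposition machinery and is arguably more elementary.
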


%%%%%%%%%%%%%%%%%%%%%%%%%%%%%%%%%%%%%%%%%%%%%%%%%%%%%%%%%%%%%%%%%%%%%%%%%%%%%%%%%%%%%%%%%%%%%%%%%%%%%%%%%%%%%%%%%%%%%%%%%%%%%%%%%%%%%%%%%%%%%%%%%%%%%%%%%%%%%%%%%%%%%%%%%%%%%%%%%%%%
%%%%%%%%%%%%%%%%%%%%%%%%%%%%%%%%%%%%%%%%%%%%%%%%%%%%%%%%%%%%%%%%%%%%%%%%%%%%%%%%%%%%%%%%%%%%%%%%%%%%%%%%%%%%%%%%%%%%%%%%%%%%%%%%%%%%%%%%%%%%%%%%%%%%%%%%%%%%%%%%%%%%%%%%%%%%%%%%%%%%
%%%%%%%%%%%%%%%%%%%%%%%%%%%%%%%%%%%%%%%%%%%%%%%%%%%%%%%%%%%%%%%%%%%%%%%%%%%%%%%%%%%%%%%%%%%%%%%%%%%%%%%%%%%%%%%%%%%%%%%%%%%%%%%%%%%%%%%%%%%%%%%%%%%%%%%%%%%%%%%%%%%%%%%%%%%%%%%%%%%%
%%%%%%%%%%%%%%%%%%%%%%%%%%%%%%%%%%%%%%%%%%%%%%%%%%%%%%%%%%%%%%%%%%%%%%%%%%%%%%%%%%%%%%%%%%%%%%%%%%%%%%%%%%%%%%%%%%%%%%%%%%%%%%%%%%%%%%%%%%%%%%%%%%%%%%%%%%%%%%%%%%%%%%%%%%%%%%%%%%%%
\section { \bf $\bra{i}{m}$-core inverse}
Let us start this section by introducing the definition of the $\bra{i}{m}$-core inverse.

\begin{definition} \label{kcoredefj}
Let $A\in \mathbb{C}^{n\times n}$ and $m,i\in\mathbb{N}$. A matrix $X\in \mathbb{C}^{n\times n}$ is called an
$\bra{i}{m}$-core inverse of $A$, if it satisfies
\begin{equation} \label{kcoredefij}
X=A^{D}AX~~\text{and}~~A^{m}X=A^{i}(A^{i})^{\dagger}.
\end{equation}
\end{definition}

It will be proved that if $X$ exists, then it is unique and denoted by $A_{i,m}^{\oplus}$.

\begin{theorem} \label{formulaja}
Let $A\in \mathbb{C}^{n\times n}$. If exists $X\in \mathbb{C}^{n\times n}$ such that \emph{(\ref{kcoredefij})} holds, then $X$ is unique.% and $X=(A^{D})^{m}A^{i}(A^{i})^{\dagger}$.
\end{theorem}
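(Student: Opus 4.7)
The plan is to show that the two defining relations force $X$ to equal an explicit matrix built from $A$, $m$, and $i$ alone, which immediately gives uniqueness. Concretely, I expect $X=(A^{D})^{m}A^{i}(A^{i})^{\dagger}$, and I would derive this by iterating the first condition and then substituting the second.

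First, I would start from $X=A^{D}AX$ and apply it repeatedly. Since $A$ and $A^{D}$ commute and $A^{D}A$ is idempotent, iterating this relation $m$ times yields $X=(A^{D}A)^{m}X$. Using commutativity of $A$ with $A^{D}$, I can pull all of the $A^{D}$'s to the left of all of the $A$'s to obtain
\begin{equation*}
X=(A^{D}A)^{m}X=(A^{D})^{m}A^{m}X.
\end{equation*}
Substituting the second condition $A^{m}X=A^{i}(A^{i})^{\dagger}$ then gives
\begin{equation*}
X=(A^{D})^{m}A^{i}(A^{i})^{\dagger},
\end{equation*}
a matrix that depends only on $A$, $m$ and $i$. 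Hence any two matrices satisfying (\ref{kcoredefij}) must coincide. Equivalently, one can argue by linearity: if $X_{1}$ and $X_{2}$ both satisfy (\ref{kcoredefij}), then their difference $Y=X_{1}-X_{2}$ satisfies $Y=A^{D}AY$ and $A^{m}Y=0$, so $Y=(A^{D})^{m}A^{m}Y=0$.

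I do not expect any genuine obstacle in this argument; it is a short manipulation using only the standard Drazin identities $AA^{D}=A^{D}A$ and the idempotence of $A^{D}A$. The only point that deserves a line of justification is the equality $(A^{D}A)^{m}=(A^{D})^{m}A^{m}$, which follows by a one-step induction from the commutativity $AA^{D}=A^{D}A$. Everything else is substitution.
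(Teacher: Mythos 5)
Your proposal is correct and follows exactly the paper's own argument: iterate $X=A^{D}AX$ to get $X=(A^{D})^{m}A^{m}X$ and substitute the second equation to obtain the explicit expression $X=(A^{D})^{m}A^{i}(A^{i})^{\dagger}$, which forces uniqueness. No differences worth noting.
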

\begin{proof}
Assume that $X$ satisfies the system in (\ref{kcoredefij}), that is $X=A^{D}AX$ and $A^{m}X=A^{i}(A^{i})^{\dagger}$. Thus
$X=A^{D}AX=(A^{D})^{m}A^{m}X=(A^{D})^{m}A^{i}(A^{i})^{\dagger}.$ Therefore, $X$ is unique by the uniqueness of $A^{D}$ and $A^{i}(A^{i})^{\dagger}$.
\end{proof}

\begin{theorem} \label{formulaj}
The system in \emph{(\ref{kcoredefij})} is consistent if and only if $i\geq {\rm ind}(A)$.
In this case, the solution of \emph{(\ref{kcoredefij})} is $X=(A^{D})^{m}A^{i}(A^{i})^{\dagger}$.
\end{theorem}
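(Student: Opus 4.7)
The plan is to reduce both implications to the single identity $A A^{D} A^{i} = A^{i}$, which captures exactly when $\rr(A^{i}) \subseteq \rr(A^{k})$ (with $k = {\rm ind}(A)$), and hence is equivalent to $i \geq k$. As preparation, I would rederive the candidate formula for $X$ essentially as in Theorem~\ref{formulaja}: if $X$ solves \emph{(\ref{kcoredefij})}, then using $A^{D}A = A A^{D}$ together with the idempotency of $A^{D}A$ (which follows from $A^{D} A A^{D} = A^{D}$), one gets $(A^{D})^{m} A^{m} = A^{D}A$, and therefore $X = A^{D}A X = (A^{D})^{m} A^{m} X = (A^{D})^{m} A^{i} (A^{i})^{\dagger}$. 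This turns the consistency question into whether this candidate satisfies the second equation of \emph{(\ref{kcoredefij})}.

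For the forward direction, assume \emph{(\ref{kcoredefij})} is consistent. Substituting the formula above into $A^{m} X = A^{i}(A^{i})^{\dagger}$ and using $A^{m}(A^{D})^{m} = A A^{D}$ yields $A A^{D} A^{i} (A^{i})^{\dagger} = A^{i} (A^{i})^{\dagger}$. Right-multiplying by $A^{i}$ and invoking the Moore--Penrose identity $A^{i} (A^{i})^{\dagger} A^{i} = A^{i}$ gives $A A^{D} A^{i} = A^{i}$. Since $A A^{D}$ is the projector (along $\kk(A^{k})$) onto $\rr(A^{k})$, this forces $\rr(A^{i}) \subseteq \rr(A^{k})$. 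The chain $\rr(A^{0}) \supsetneq \rr(A^{1}) \supsetneq \cdots \supsetneq \rr(A^{k}) = \rr(A^{k+1}) = \cdots$ is strictly decreasing up to $k$ and constant thereafter, so the inclusion $\rr(A^{i}) \subseteq \rr(A^{k})$ is equivalent to $i \geq k$.

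For the reverse direction, assume $i \geq k$ and set $X = (A^{D})^{m} A^{i} (A^{i})^{\dagger}$. The key identity $A A^{D} A^{i} = A^{i}$ now holds by writing $A^{i} = A^{k} A^{i-k}$ and applying the standard Drazin fact $A A^{D} A^{k} = A^{k}$. Both equations of \emph{(\ref{kcoredefij})} then follow by direct computation: $A^{D} A X = (A^{D})^{m} (A^{D} A) A^{i} (A^{i})^{\dagger} = (A^{D})^{m} A^{i} (A^{i})^{\dagger} = X$, and $A^{m} X = (A A^{D})^{m} A^{i} (A^{i})^{\dagger} = A A^{D} A^{i} (A^{i})^{\dagger} = A^{i} (A^{i})^{\dagger}$.

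The main obstacle I expect is cleanly handling the two different projectors in play: the oblique $A A^{D}$ onto $\rr(A^{k})$ and the orthogonal $A^{i} (A^{i})^{\dagger}$ onto $\rr(A^{i})$. The equivalence $A A^{D} A^{i} = A^{i} \Leftrightarrow i \geq {\rm ind}(A)$ is the heart of the theorem, and both halves of it rely on the minimality built into the definition of the index rather than on any Moore--Penrose identity. Once this bridge is established, the remaining manipulations are just iterated use of the commutativity of $A$ and $A^{D}$ and the idempotency of $A A^{D}$.
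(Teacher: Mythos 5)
Your proposal is correct and follows essentially the same route as the paper: derive the candidate $X=(A^{D})^{m}A^{i}(A^{i})^{\dagger}$ from the first equation, verify both equations directly when $i\geq{\rm ind}(A)$, and for the converse extract $AA^{D}A^{i}=A^{i}$ by right-multiplying $AA^{D}A^{i}(A^{i})^{\dagger}=A^{i}(A^{i})^{\dagger}$ by $A^{i}$. The only difference is cosmetic: you spell out why $AA^{D}A^{i}=A^{i}$ is equivalent to $i\geq{\rm ind}(A)$ via the chain of column spaces, a step the paper leaves implicit.
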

\begin{proof} Assume that  $i\geq {\rm ind}(A)$.
Let $X=(A^{D})^{m}A^{i}(A^{i})^{\dagger}$. We have
\begin{eqnarray*}
&&A^DAX=A^DA(A^{D})^{m}A^{i}(A^{i})^{\dagger}=(A^{D})^{m}A^DAA^{i}(A^{i})^{\dagger}=(A^{D})^{m}A^{i}(A^{i})^{\dagger}=X;\\
&&A^mX=A^m(A^{D})^{m}A^{i}(A^{i})^{\dagger}=A^DAA^{i}(A^{i})^{\dagger}=A^{i}(A^{i})^{\dagger}.
\end{eqnarray*}
Thus, the system in (\ref{kcoredefij}) is consistent and the solution of (\ref{kcoredefij}) is $X=(A^{D})^{m}A^{i}(A^{i})^{\dagger}$.

If the system in (\ref{kcoredefij}) is consistent, then exists $X_{0}$ such that
$X_{0}=A^DAX_{0}$ and $A^m X_{0}=A^i (A^{i})^{\dagger}$. Then $X_{0}=A^DAX_{0}=(A^D)^{m}A^mX_{0}=(A^{D})^{m}A^{i}(A^{i})^{\dagger}$
and $A^i (A^{i})^{\dagger}=A^m X_{0}=A^m (A^{D})^{m}A^{i}(A^{i})^{\dagger}=AA^DA^i (A^{i})^{\dagger}$.
Hence $A^i=A^i (A^{i})^{\dagger} A^i=AA^DA^i (A^{i})^{\dagger} A^{i}=AA^DA^i$, that is $i\geq {\rm ind}(A)$.
\end{proof}

\begin{example} \label{exmpmja}
We will give an example that shows if $i< {\rm ind}(A)$, then the system in \emph{(\ref{kcoredefij})} is not consistent.
Let $A=\mat{0}{1}{0}{0}$. It is easy to get ${\rm ind}(A)=2$ and $A^D=0$. Let $i=1$ and suppose that $X$ is the solution of system in $(\ref{kcoredefij})$,
then $X=A^DAX=0$, which gives $AA^\dag=A^mX=0$, thus $A=AA^\dag A=0$, this is a contradiction.
\end{example}

\begin{remark} \label{mjjiangci}
If $i\geq {\rm ind}(A)$, then $A_{i,m+1}^{\oplus}=A^DA_{i,m}^{\oplus}$.
\end{remark}

\begin{remark} \label{mjgenerthree}
The $\bra{i}{m}$-core inverse is a generalization of the core inverse and the core-EP inverse.
More precisely, we have the following statements:
\begin{itemize}
\item[{\rm (1)}] If $m=i={\rm ind}(A)=1$, then the ${\rm \langle 1,1\rangle}$-core inverse coincides with the core inverse;
\item[{\rm (2)}] If $m=1$ and $i={\rm ind}(A)$, then the ${\rm \langle i,1\rangle}$-core inverse coincides with the core-EP inverse.
\end{itemize}
\end{remark}
For the convenience of the readers, in the following, we give some notes of $(1)$ and $(2)$ in Remark~\ref{mjgenerthree}.

$(1)$. If $m=i={\rm ind}(A)=1$, then $A$ is group invertible and $A^{D}=A^\#$ and $(\ref{kcoredefij})$ is equivalent to
$X=A^\#AX$ and $AX=AA^{\dagger}$. Thus $X=A^\#AX=A^\#AA^{\dagger},$ $(AX)^{\ast}=(AA^{\dagger})^{\ast}=AA^{\dagger}=AX$,
$AX^{2}=AA^\#AA^{\dagger}A^\#AA^{\dagger}=AA^\#AA^{\dagger}AA^\#A^{\dagger}=AA^\#A^{\dagger}=X$ and
$XA^{2}=A^\#AA^{\dagger}A^{2}=A^\#A^{2}=A$. Hence, ${\rm \langle 1,1\rangle}$-core inverse coincides with the core inverse by Lemma~\ref{threecore}.
Note that if $A$ is group invertible, then we have that $X$ is the core inverse of $A$ if and only if $X=A^\# AX$ and $AX=AA^\dag$.

$(2)$. If $m=1$ and $i={\rm ind}(A)$, then by Theorem 3.3, $A_{i,1}^\oplus$ exists and
$A_{i,1}^\oplus = A^D A^i (A^i)^\dagger$. Let us denote
$X= A_{i,1}^\oplus = A^D A^i (A^i)^\dagger$. Observe that
$AX = A^i (A^i)^\dag$ is Hermitian. Now,
$$
X A X
=
A^D A^i (A^i)^\dagger A^i (A^i)^\dagger
=
A^D A^i (A^i)^\dagger
=
X,
$$
that is $X$ is an outer inverse of $A$.
From $A^i = A^D A^{i+1} = A^D A^i (A^i)^\dag A^i A = X A^{i+1}$
we get $\mathcal{R}(A^i) \subseteq \mathcal{R}(X)$. Also,
$A X^2 = (AX)X=A^i (A^i)^\dag A^D A^i (A^i)^\dag =
A^i (A^i)^\dag A^i A^D (A^i)^\dag = A^D A^i (A^i)^\dag = X$,
which implies $X = (A X)^*X \in \mathcal{R}(X^*)$, therefore,
$\mathcal{R}(X) \subseteq \mathcal{R}(X^*)$. Finally,
$X^* = [A^D A^i (A^i)^\dag]^* = A^i (A^i)^\dag (A^D)^*$ implies
$\mathcal{R}(X^*) \subseteq \mathcal{R}(A^i)$. Hence
$\mathcal{R}(X) = \mathcal{R}(X^*) = \mathcal{R}(A^i)$.
 Therefore, the ${\rm \langle i,1\rangle}$-core inverse coincides with the core-EP inverse
by the definition of the core-EP inverse.

From the above statement, we have the following theorem.

\begin{theorem}
Let $A\in \mathbb{C}^{n\times n}$ with $i={\rm ind}(A)$. Then $X$ is the core-EP inverse of $A$ if and only if
$X=A^D AX$ and $AX=A^i (A^i)^{\dagger}$.
\end{theorem}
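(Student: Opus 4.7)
The plan is to observe that this theorem is essentially a repackaging of the analysis carried out in Remark~\ref{mjgenerthree}(2) combined with Theorem~\ref{formulaj}, so no new machinery is needed: one only has to note that the two given conditions are precisely the defining system of the $\langle i,1\rangle$-core inverse, and then quote uniqueness of both objects.

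For the ``if'' direction, suppose $X = A^D AX$ and $AX = A^i(A^i)^\dagger$. These are exactly equations \eqref{kcoredefij} with $m=1$. Since $i = {\rm ind}(A)$, Theorem~\ref{formulaj} applies and forces $X = A^D A^i (A^i)^\dagger$. The computations displayed immediately after Remark~\ref{mjgenerthree} verify that this particular matrix $X$ is an outer inverse of $A$ and satisfies $\rr(X) = \rr(X^\ast) = \rr(A^i)$, which is the definition of the core-EP inverse. By uniqueness of $\coep{A}$, the two coincide.

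For the ``only if'' direction, let $X = \coep{A}$. The identity $AX = A^i(A^i)^\dagger$ is precisely Lemma~\ref{procorp}. To obtain $X = A^D AX$, use $\rr(X) \subseteq \rr(A^i)$ (from the definition of the core-EP inverse) to write $X = A^i Z$ for some $Z \in \mathbb{C}^{n\times n}$. Then
\[
A^D A X = A^D A \cdot A^i Z = A^D A^{i+1} Z = A^i Z = X,
\]
where the penultimate equality uses $A^D A^{i+1} = A^i$, valid since $i \geq {\rm ind}(A)$.

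The only point that requires any care is justifying $X = A^D AX$ in the ``only if'' direction; everything else is a direct invocation of results already proved. Even this step is routine once one exploits $\rr(X)=\rr(A^i)$, so I do not expect any real obstacle.
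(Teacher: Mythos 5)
Your proposal is correct and takes essentially the same route as the paper: the theorem is exactly the repackaging of the computations following Remark~\ref{mjgenerthree}(2) together with Theorems~\ref{formulaja} and~\ref{formulaj}, which is how the paper derives it. Your ``only if'' direction verifies the two equations for $\coep{A}$ directly via Lemma~\ref{procorp} and the range condition $\rr(X)=\rr(A^i)$ rather than appealing to uniqueness of the solution of the system, but this is an equivalent and equally valid way to close that half.
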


\begin{corollary}
Let $A\in \mathbb{C}^{n\times n}$ with $1={\rm ind}(A)$. Then $X$ is the core inverse of $A$ if and only if
$X=A^\# AX$ and $AX=AA^{\dagger}$.
\end{corollary}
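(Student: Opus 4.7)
The plan is to treat this as a direct specialization of the preceding Theorem to the case $i=1$, together with a verification that the core-EP inverse and the core inverse coincide when $\mathrm{ind}(A)=1$.

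First, I would observe that the hypothesis $\mathrm{ind}(A)=1$ means $A$ is group invertible, so $A^D=A^\#$ and $A^i=A^i(A^i)^\dagger$ becomes $A A^\dagger$. Plugging $i=1$ into the preceding Theorem therefore yields: $X$ is the core-EP inverse of $A$ if and only if $X=A^\# AX$ and $AX=AA^\dagger$. So the corollary will follow as soon as I establish that, under the hypothesis $\mathrm{ind}(A)=1$, the core-EP inverse of $A$ agrees with the core inverse of $A$.

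For that last identification, the quickest route is to solve the two conditions explicitly: from $X=A^\# A X$ and $AX=A A^\dagger$, left-multiplying the first by $A^\#A$ gives $X = A^\# (A X) = A^\# A A^\dagger$, which is precisely the formula $\core{A}=A^\# A A^\dagger$ recorded in equation~(\ref{corecsforma}). Conversely, $X=\core{A}=A^\# A A^\dagger$ plainly satisfies both equations using $A^\# A A = A$ and $(A A^\dagger)^2 = A A^\dagger$. Alternatively, one may simply invoke part~(1) of Remark~\ref{mjgenerthree}, where the note already verifies that the system $X=A^\#AX$, $AX=AA^\dagger$ forces $X$ to meet the hypotheses of Lemma~\ref{threecore}, hence is the core inverse.

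I do not expect any real obstacle: the work has been done by the preceding Theorem and by the derivation written out for Remark~\ref{mjgenerthree}(1). The only mild care needed is to make clear that the $\mathrm{ind}(A)=1$ specialization is legitimate — that is, that the Theorem's hypothesis $i=\mathrm{ind}(A)$ is compatible with $\mathrm{ind}(A)=1$ — and to cite the right formula so that the identification $X=A^\# A A^\dagger=\core{A}$ is transparent rather than requiring a fresh verification.
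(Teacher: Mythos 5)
Your proposal is correct and follows essentially the same route as the paper: the paper justifies this corollary via the discussion in Remark~\ref{mjgenerthree}(1), which likewise reduces the system to $X=A^\#AX=A^\#AA^\dagger$ and identifies this with $\core{A}$ (the paper checks the conditions of Lemma~\ref{threecore}, while you equivalently cite the formula $\core{A}=A^\#AA^\dagger$ from~(\ref{corecsforma})). Both directions are handled, so no gap.
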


For any $A\in \mathbb{C}^{n\times n}$, either $A^{l}=0$ for some $l\in \mathbb{N}$, or $A^{l}\neq 0$ for all positive integers.
Moreover, if ${\rm ind}(A)=k$, then $G_{k}B_{k}$ is nonsingular (see \cite{Ch,Cj,Cg}), where
$A=B_{1}G_{1}$ is a full rank factorization of $A$ and $G_{l}B_{l}=B_{l+1}G_{l+1}$ is a full rank factorization of $G_{l}B_{l}$, $l=1,\ldots,k-1$.
When $A^{k}\neq 0$, then it can be written as
\begin{equation} \label{repreak}
A^{k}=\prod^{k}_{l=1}B_{l}\prod^{k}_{l=1}G_{k+1-l}.
\end{equation}
We have the following results, (see \cite[Theorem 4]{Ch} or
\cite[Theorem 7.8.2]{CM}):
\begin{equation*}
{\rm ind}(A)=\left\{\begin{aligned}
&k,  &&{}\text{when}~G_{k}B_{k}~\text{is~nonsingualr};\\
&k+1,               &&{}\text{when}~G_{k}B_{k}=0.
\end{aligned}
\right.
\end{equation*}
and
\begin{equation} \label{drazinrepreak}
A^{D}=\left\{\begin{aligned}
&\prod^{k}_{l=1}B_{l}(G_{k}B_{k})^{-k-1}\prod^{k}_{l=1}G_{k+1-l},  &&{}\text{when}~G_{k}B_{k}~\text{is~nonsingualr};\\
&0,               &&{}\text{when}~G_{k}B_{k}=0.
\end{aligned}
\right.
\end{equation}
In the sequel, we always assume that $A^{k}\neq 0$.% Note that, however, the results related with  still hold for the $A^{k}=0$ case.

It is well-known that if $A=EF$ is a full rank factorization of $A$, where $r=\rk(A)$, $E\in \mathbb{C}^{n\times r}$ and $F\in \mathbb{C}^{r\times n}$, then
(see \cite[Theorem 1.3.2]{CM})
\begin{equation} \label{mpfullrank}
A^{\dagger}=F^\ast(FF^\ast)^{-1}(E^\ast E)^{-1}E^\ast.
\end{equation}

\begin{remark} \label{remakpower}
The notations and  results in above paragraph will be used many times in the sequel.
\end{remark}

We will investigate the $\bra{i}{m}$-core inverse of a matrix $A\in \mathbb{C}^{n\times n}$ by using Remark~\ref{remakpower}.

\begin{theorem} \label{indexsame}
Let $A\in \mathbb{C}^{n\times n}$ with ${\rm ind}(A)=k$. If $i\geq k$, then $A_{i,m}^{\oplus}=A_{k,m}^{\oplus}$.
\end{theorem}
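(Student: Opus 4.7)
The plan is to reduce the claim to a single routine identity by directly invoking Theorem~\ref{formulaj}. Since $\mathrm{ind}(A)=k$, both $i\ge k$ and $k\ge k$ satisfy the hypothesis of that theorem, so the $\bra{i}{m}$-core inverse and the $\bra{k}{m}$-core inverse both exist and are given by the closed-form expressions
\begin{equation*}
A_{i,m}^{\oplus}=(A^{D})^{m}A^{i}(A^{i})^{\dagger},\qquad A_{k,m}^{\oplus}=(A^{D})^{m}A^{k}(A^{k})^{\dagger}.
\end{equation*}
Thus the theorem reduces to checking the identity $(A^{D})^{m}A^{i}(A^{i})^{\dagger}=(A^{D})^{m}A^{k}(A^{k})^{\dagger}$, and it clearly suffices to prove $A^{i}(A^{i})^{\dagger}=A^{k}(A^{k})^{\dagger}$.

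For this second step I would use that for every matrix $B$, the product $BB^{\dagger}$ is the orthogonal projector $P_{\rr(B)}$. Applied to $B=A^{i}$ and $B=A^{k}$, the required identity becomes $P_{\rr(A^{i})}=P_{\rr(A^{k})}$, which in turn is equivalent to $\rr(A^{i})=\rr(A^{k})$. This last equality is a standard consequence of $\mathrm{ind}(A)=k$: from $\rk(A^{k})=\rk(A^{k+1})$ together with the inclusion $\rr(A^{k+1})\subseteq \rr(A^{k})$ one deduces $\rr(A^{k+1})=\rr(A^{k})$, and a straightforward induction on $i-k$ then yields $\rr(A^{i})=\rr(A^{k})$ for every $i\ge k$. (Alternatively, the block form \eqref{cs} combined with formula \eqref{daacore} applied to $A^{i}$ and $A^{k}$ makes the equality of the two projectors transparent, since both collapse to the projector onto the $\mathrm{rk}(A^{k})$-dimensional invariant subspace underlying the core part of $A$.)

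There is no real obstacle here beyond recognizing that the formula in Theorem~\ref{formulaj} depends on $i$ only through the projector $A^{i}(A^{i})^{\dagger}$, which stabilizes once $i$ reaches the Drazin index. The whole proof is a two-line chain of substitutions once the range-stability of $A^{j}$ for $j\ge\mathrm{ind}(A)$ is recorded.
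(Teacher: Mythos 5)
Your proposal is correct and is essentially the paper's own proof: the paper likewise invokes Theorem~\ref{formulaj} and reduces everything to $\rr(A^{i})=\rr(A^{k})$ for $i\geq k={\rm ind}(A)$, which forces $A^{i}(A^{i})^{\dagger}=A^{k}(A^{k})^{\dagger}$ as equal orthogonal projectors. You simply spell out the range-stability argument that the paper leaves implicit.
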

\begin{proof}
Since ${\rm ind}(A)=k$, we have $\mathcal{R}(A^k)=\mathcal{R}(A^i)$ for any $i \geq k$, and
therefore, $A^k (A^k)^\dag = A^i (A^i)^\dag$. Now, the conclusion follows from Theorem~\ref{formulaj}.
\end{proof}

\begin{remark} \label{indexsameaa}
The proof of Theorem~$\ref{indexsame}$ also can be proved as follows. Since the proof in this remark will be used several times in the sequel,
we write this proof here.
\end{remark}
\begin{proof}
If $A$ is nilpotent, then $A^D=0$, hence
by Theorem~\ref{formulaj}, one has $A^\oplus_{i,m} = A^\oplus_{k,m}=0$. Therefore, we can assume that $A^k \neq 0$.
By equality (\ref{repreak}), we have
\begin{equation} \label{repreakaa}
A^{k}=\prod^{k}_{l=1}B_{l}\prod^{k}_{l=1}G_{k+1-l}.
\end{equation}
where
$A=B_{1}G_{1}$ is a full rank factorization of $A$ and $G_{l}B_{l}=B_{l+1}G_{l+1}$ is a full rank factorization of $G_{l}B_{l}$, $l=1,\ldots,k-1$.
Let $M=\prod\limits_{l=1}^{k}B_{l}$, $N=\prod\limits_{l=1}^{k}G_{k+1-l}$ and $L=G_{k}B_{k}$. Now, we will show that
$$A^{i}=\prod\limits^{k}_{l=1}B_{l}(G_{k}B_{k})^{i-k}\prod\limits^{k}_{l=1}G_{k+1-l}=ML^{i-k}N.$$ In fact,
\begin{equation} \label{ajrepresata}
\begin{split}
A^{i}=&\prod^{i}_{l=1}B_{l}\prod^{i}_{l=1}G_{k+1-l}=B_{1}\cdots B_{i}G_{i}\cdots G_{1}
= B_{1}\cdots B_{i-1}(B_{i}G_{i})G_{i-1}\cdots G_{1}\\
=& B_{1}\cdots B_{i-1}(G_{i-1}B_{i-1})G_{i-1}\cdots G_{1}
= B_{1}\cdots B_{i-2}(G_{i-2}B_{i-2})^{2}G_{i-2}\cdots G_{1}\\
=& \cdots\cdots\\
=& B_{1}\cdots B_{k}(G_{k}B_{k})^{i-k}G_{k}\cdots G_{1}
= ML^{i-k}N.
\end{split}
\end{equation}
If we let $M_{1}= ML^{i-k}$, then $A^{i}=ML^{i-k}N=M_{1}N$ is a full rank factorization of $A^i$ (see \cite[p.183]{Cg}). Thus
\begin{equation} \label{mpfullrankaa}
(A^{i})^{\dagger}=N^\ast(NN^\ast)^{-1}(M_{1}^\ast M_{1})^{-1}M_{1}^\ast.
\end{equation}
Note that $NM=\prod\limits_{l=1}^{k} G_{k+1-l}\prod\limits_{l=1}^{k} B_{l}=L^{k}$. By Theorem~\ref{formulaj}, (\ref{drazinrepreak}) and $(\ref{mpfullrankaa})$ we have
\begin{equation} \label{ajrepresatadd}
\begin{split}
A^{\oplus}_{i,1}=&A^{D}A^{i}(A^{i})^{\dagger}=ML^{-k-1}NML^{i-k}N(A^{i})^{\dagger}\\
=& ML^{-k-1}NML^{i-k}NN^\ast(NN^\ast)^{-1}(M_{1}^\ast M_{1})^{-1}M_{1}^\ast\\
=& ML^{i-k-1}NN^\ast(NN^\ast)^{-1}(M_{1}^\ast M_{1})^{-1}M_{1}^\ast\\
=& ML^{i-k-1}(M_{1}^\ast M_{1})^{-1}M_{1}^\ast\\
=& ML^{i-k-1}[(L^{i-k})^\ast M^{\ast}ML^{i-k}]^{-1}(L^{i-k})^\ast M^{\ast}\\
=& ML^{i-k-1}L^{k-i}(M^{\ast}M)^{-1}[(L^{i-k})^\ast]^{-1}(L^{i-k})^\ast M^{\ast}\\
=& ML^{-1}(M^{\ast}M)^{-1}M^{\ast}.
\end{split}
\end{equation}
The last expression does not depend on $i$, then
$A^\oplus_{i,1} = A^\oplus_{k,1}$.
Thus, by Remark~$\ref{mjjiangci}$, we have
$
A^{\oplus}_{i,m}= A^DA^{\oplus}_{i,m-1}=A^D(A^DA^{\oplus}_{i,m-2})=(A^D)^{2}A^{\oplus}_{i,m-2}
=\cdots =(A^D)^{m-1}A^{\oplus}_{i,1}=(A^D)^{m-1}A^{\oplus}_{k,1}=A^{\oplus}_{k,m}.
$
\end{proof}

\begin{remark}
By Theorem~$\ref{indexsame}$, it is enough to investigate the $i={\rm ind}(A)=k$ case, when we discuss
the $\bra{i}{m}$-core inverse of a matrix $A\in \mathbb{C}^{n\times n}$. That is, the Theorem~$\ref{indexsame}$ is a key theorem.
\end{remark}

\begin{theorem} \label{indexsameformu}
Let $A\in \mathbb{C}^{n\times n}$ with ${\rm ind}(A)=k$ and $k,m\in\mathbb{N}$. If $A=B_{1}G_{1}$ is a full rank factorization of $A$
and $G_{l}B_{l}=B_{l+1}G_{l+1}$ is a full rank factorization of $G_{l}B_{l}$, $l=1,\ldots,k-1$, then $A_{k,m}^{\oplus}=ML^{-m}M^{\dagger}$,
where $M=\prod\limits_{l=1}^{k}B_{l}$, $N=\prod\limits_{l=1}^{k}G_{k+1-l}$ and $L=G_{k}B_{k}$.
\end{theorem}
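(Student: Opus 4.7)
My plan is to derive the formula by composing two ingredients that have already been (essentially) computed in the paper: the closed form of $A^{\oplus}_{k,1}$ from the proof inside Remark~\ref{indexsameaa}, and the recursion $A^{\oplus}_{i,m+1}=A^D A^{\oplus}_{i,m}$ from Remark~\ref{mjjiangci}. In particular, I will not redo the full-rank factorization calculation; I will simply quote its conclusion and then iterate.

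First I would note that since $\mathrm{ind}(A)=k$ and $A^{k}\neq 0$, the block $L=G_{k}B_{k}$ is nonsingular, so $L^{-m}$ makes sense. Also, the matrix $M=\prod_{l=1}^{k}B_{l}$ has full column rank (as a product of full-column-rank factors from the successive full rank factorizations), hence $M^{\dagger}=(M^{\ast}M)^{-1}M^{\ast}$. From the computation inside Remark~\ref{indexsameaa} we already know
\[
A^{\oplus}_{k,1}=ML^{-1}(M^{\ast}M)^{-1}M^{\ast}=ML^{-1}M^{\dagger}.
\]
Combined with the relation $NM=\prod_{l=1}^{k}G_{k+1-l}\prod_{l=1}^{k}B_{l}=L^{k}$ and the formula $A^{D}=ML^{-k-1}N$ from (\ref{drazinrepreak}), this gives the two building blocks I need.

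Next I would establish by induction on $j\geq 1$ the clean power formula
\[
(A^{D})^{j}=ML^{-k-j}N.
\]
The base case $j=1$ is exactly (\ref{drazinrepreak}), and the inductive step is a one-line calculation using $NM=L^{k}$:
\[
(A^{D})^{j+1}=ML^{-k-j}N\cdot ML^{-k-1}N=ML^{-k-j}L^{k}L^{-k-1}N=ML^{-k-j-1}N.
\]

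Finally, iterating Remark~\ref{mjjiangci} gives $A^{\oplus}_{k,m}=(A^{D})^{m-1}A^{\oplus}_{k,1}$, and I plug in the two ingredients:
\[
A^{\oplus}_{k,m}=ML^{-k-m+1}N\cdot ML^{-1}M^{\dagger}=ML^{-k-m+1}L^{k}L^{-1}M^{\dagger}=ML^{-m}M^{\dagger},
\]
which is the desired identity. The only conceptual care needed is the case $m=1$, which is immediate (no iteration required), and the convention $A^{k}\neq 0$ under which $L$ is invertible. I do not expect any serious obstacle: everything is bookkeeping of powers of $L$ driven by the single algebraic fact $NM=L^{k}$; if there is a ``hard'' step, it is recognizing that the awkward factor $(M^{\ast}M)^{-1}M^{\ast}$ produced in Remark~\ref{indexsameaa} is just $M^{\dagger}$ because $M$ has full column rank, which is what turns the earlier expression into the symmetric form $ML^{-m}M^{\dagger}$.
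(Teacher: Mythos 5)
Your proposal is correct and follows essentially the same route as the paper: both quote $A^{\oplus}_{k,1}=ML^{-1}(M^{\ast}M)^{-1}M^{\ast}$ and $NM=L^{k}$ from Remark~\ref{indexsameaa}, use $A^{D}=ML^{-k-1}N$, and iterate Remark~\ref{mjjiangci} via $A^{\oplus}_{k,m}=(A^{D})^{m-1}A^{\oplus}_{k,1}$. The only (cosmetic) difference is that you run the induction on $(A^{D})^{j}=ML^{-k-j}N$ alone, whereas the paper inducts directly on the product $(A^{D})^{s}A^{\oplus}_{k,1}$; the bookkeeping with powers of $L$ is identical.
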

\begin{proof}
By the proof of Remark~\ref{indexsameaa}, we have $A^{\oplus}_{k,1}=ML^{-1}(M^\ast M)^{-1}M^{\ast}$ and $NM=L^{k}$.
Now, we will prove $(A^D)^{s}A^{\oplus}_{k,1}=ML^{-s-1}(M^\ast M)^{-1}M^{\ast}$ for any $s\in \mathbb{N}$.
By (\ref{drazinrepreak}) we have $A^{D}=\prod\limits^{k}_{l=1}B_{l}(G_{k}B_{k})^{-k-1}\prod\limits^{k}_{l=1}G_{k+1-l}=ML^{-k-1}N$.
When $s=1$, we have
\begin{equation*}
\begin{split}
A^DA^{\oplus}_{k,1}=& ML^{-k-1}NML^{-1}(M^\ast M)^{-1}M^{\ast}
= ML^{-k-1}(NM)L^{-1}(M^\ast M)^{-1}M^{\ast}\\
=& ML^{-k-1}L^{k}L^{-1}(M^\ast M)^{-1}M^{\ast}
= ML^{-2}(M^\ast M)^{-1}M^{\ast}.
\end{split}
\end{equation*}
Assume that $(A^D)^{s-1}A^{\oplus}_{k,1}=ML^{-s}(M^\ast M)^{-1}M^{\ast}$. Then
\begin{equation*}
\begin{split}
(A^D)^{s}A^{\oplus}_{k,1}=& A^D(A^D)^{s-1}A^{\oplus}_{k,1}
= A^DML^{-s}(M^\ast M)^{-1}M^{\ast}\\
=& ML^{-k-1}NML^{-s}(M^\ast M)^{-1}M^{\ast}
= ML^{-k-1}L^kL^{-s}(M^\ast M)^{-1}M^{\ast}\\
=& ML^{-s-1}(M^\ast M)^{-1}M^{\ast}.
\end{split}
\end{equation*}
Thus by Remark~\ref{mjjiangci}, we have
$$A^{\oplus}_{k,m}=(A^D)^{m-1}A^{\oplus}_{k,1}=ML^{-m}(M^{\ast}M)^{-1}M^{\ast}=ML^{-m}M^{\dagger}.$$
\end{proof}

In the following theorem, we will give a canonical form for the $\bra{k}{m}$-core inverse of a matrix $A\in \mathbb{C}^{n\times n}$
by using the matrix decomposition in Theorem~\ref{CS}.
We will also use the following simple fact:
Let $X \in \mathbb{C}^{n \times m}$ and ${\bf b} \in \mathbb{C}^n$.
If ${\bf y} \in \mathbb{C}^m$ satisfies $X^*X {\bf y}= X^* {\bf b}$, then
$XX^\dag {\bf b} = X{\bf y}$.

\begin{theorem} \label{csfoemulajj}
Let $A\in \mathbb{C}^{n\times n}$ have the form (\emph{\ref{cs}}) with ${\rm ind}(A)=k$ and $m\in \mathbb{N}$. Then
\begin{equation} \label{usecsformulaj}
A^{\oplus}_{k,m}=U\mat{(MC)^{\oplus}_{k-1,m}}{0}{0}{0}U^{\ast}.
\end{equation}
\end{theorem}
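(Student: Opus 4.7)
The plan is to apply Theorem~\ref{formulaj} to express $A^{\oplus}_{k,m}=(A^{D})^{m}A^{k}(A^{k})^{\dagger}$ and then evaluate each of the three factors directly inside the decomposition~(\ref{cs}), where every matrix becomes a clean block matrix. As a preparatory step I would prove by an easy induction on $j\ge 1$ that
\[
A^{j} = U\mat{(MC)^{j}}{(MC)^{j-1}MS}{0}{0}U^{*}.
\]
Because $M$ is nonsingular and $[C\ S]$ has full row rank (the identity $C^{2}+SS^{*}=I_{r}$ recorded after Theorem~\ref{CS} says that $[C\ S][C\ S]^{*}=I_{r}$), this yields $\rk(A^{j})=\rk((MC)^{j-1})$ for $j\ge 1$, hence ${\rm ind}(MC)=k-1$. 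Consequently Theorem~\ref{formulaj} applies also to $MC$ with parameters $k-1$ and $m$, giving $(MC)^{\oplus}_{k-1,m}=[(MC)^{D}]^{m}(MC)^{k-1}((MC)^{k-1})^{\dagger}$.

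A second, very similar induction starting from~(\ref{csdrazinforl}) produces
\[
(A^{D})^{m} = U\mat{[(MC)^{D}]^{m}}{[(MC)^{D}]^{m+1}MS}{0}{0}U^{*}.
\]
For the projector $A^{k}(A^{k})^{\dagger}$, I would write $A^{k}=UQU^{*}$ with $Q=\mat{(MC)^{k}}{(MC)^{k-1}MS}{0}{0}$; unitary similarity gives $A^{k}(A^{k})^{\dagger}=U\,QQ^{\dagger}\,U^{*}$, and $QQ^{\dagger}$ is the orthogonal projector onto $\mathcal{R}(Q)$. The block-triangular shape of $Q$ shows that $\mathcal{R}(Q)$ consists of the vectors $\vv{v}{\on}$ with $v\in \mathcal{R}((MC)^{k-1}M[C\ S])=\mathcal{R}((MC)^{k-1})$, the last equality coming from the surjectivity of $M[C\ S]$. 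Therefore
\[
A^{k}(A^{k})^{\dagger} = U\mat{(MC)^{k-1}((MC)^{k-1})^{\dagger}}{0}{0}{0}U^{*}.
\]

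Multiplying the two block matrices annihilates the upper-right entry of $(A^{D})^{m}$ and leaves
\[
A^{\oplus}_{k,m} = U\mat{[(MC)^{D}]^{m}(MC)^{k-1}((MC)^{k-1})^{\dagger}}{0}{0}{0}U^{*},
\]
whose top-left block is precisely $(MC)^{\oplus}_{k-1,m}$ by the preparatory step, finishing the proof. The one delicate point I anticipate is the clean identification of $A^{k}(A^{k})^{\dagger}$ as a block matrix: the simple fact quoted just before the statement is tailored precisely to avoid an explicit computation of $(A^{k})^{\dagger}$, and once the range of $Q$ has been read off from its block structure, the orthogonal projector is forced to be block-diagonal with the stated upper-left entry.
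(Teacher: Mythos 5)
Your proposal is correct and follows essentially the same route as the paper: both start from $A^{\oplus}_{k,m}=(A^{D})^{m}A^{k}(A^{k})^{\dagger}$ (Theorem~\ref{formulaj}), compute the block forms of $(A^{D})^{m}$ and $A^{k}$, identify $A^{k}(A^{k})^{\dagger}$ as the block-diagonal orthogonal projector with upper-left block $(MC)^{k-1}((MC)^{k-1})^{\dagger}$, and check ${\rm ind}(MC)\leq k-1$ before multiplying out. The only (harmless) divergences are in two auxiliary steps: you identify the projector by reading off $\mathcal{R}(A^{k})$ from the block structure and using $(UQU^{*})^{\dagger}=UQ^{\dagger}U^{*}$, where the paper runs the normal equations on an arbitrary vector, and you get ${\rm ind}(MC)=k-1$ by the rank identity $\rk(A^{j})=\rk((MC)^{j-1})$ (using that $M[C\ |\ S]$ has full row rank), where the paper deduces $(MC)^{D}(MC)^{k}=(MC)^{k-1}$ from $A^{D}A^{k+1}=A^{k}$.
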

\begin{proof}
Let $r$ be the rank of $A$. By Theorem~\ref{formulaj} we have
$$ %\begin{equation}\label{dos}
A^\oplus_{k,m} = (A^D)^m A^k (A^k)^\dag.
$$ %\end{equation}
Since $A$ has the form given in Theorem~\ref{CS} we have
\begin{equation}\label{tres}
A^k = U \mat{(MC)^k}{(MC)^{k-1}MS}{0}{0} U^*.
\end{equation}
Let ${\bf b} \in \mathbb{C}^n$ be arbitrary and let us decompose ${\bf b} = U\vv{{\bf b}_1}{{\bf b}_2}$,
where ${\bf b}_1 \in \mathbb{C}^r$. Let ${\bf x}_0 \in \mathbb{C}^n$ satisfy
$(A^k)^* A^k {\bf x}_0 = (A^k)^* {\bf b}$ [this ${\bf x}_0$ always exists because
the normal equations always have a solution]. We can decompose
${\bf x}_0$ by writing ${\bf x}_0 = U \vv{{\bf x}_1}{{\bf x}_2}$, where ${\bf x}_1 \in \mathbb{C}^r$.
Let us denote $N=(MC)^{k-1}M$. Using (\ref{tres}),
$$
U \mat{CN^*}{0}{S^*N^*}{0} \mat{NC}{NS}{0}{0} \vv{{\bf x}_1}{{\bf x}_2} =
U \mat{CN^*}{0}{S^*N^*}{0} \vv{{\bf b}_1}{{\bf b}_2}.
$$
Therefore,
$$
CN^*N(C {\bf x}_1+S{\bf x}_2) = CN^* {\bf b}_1 \qquad \text{and} \qquad
S^*N^*N(C {\bf x}_1+S{\bf x}_2) = S^*N^* {\bf b}_1.
$$
Premultiplying the first equality by $C$ and the second equality by $S$ and after, adding them,
we get $N^*N(C {\bf x}_1+S{\bf x}_2) = N^* {\bf b}_1$, and hence,
$N(C {\bf x}_1+S{\bf x}_2) = NN^\dag {\bf b}_1$. Now,
\begin{equation*}
\begin{split}
A^k(A^k)^\dag {\bf b} & = A^k {\bf x}_0 =
U \mat{NC}{NS}{0}{0} \vv{{\bf x}_1}{{\bf x}_2} \\
& = U \vv{NC {\bf x}_1 + NS{\bf x}_2}{{\bf 0}}
= U \vv{NN^\dag {\bf b}_1}{{\bf 0}} =
U \mat{NN^\dag}{0}{0}{0} U^* {\bf b}.
\end{split}
\end{equation*}
Since ${\bf b}$ is arbitrary,
$$
A^k(A^k)^\dag  = U \mat{NN^\dag}{0}{0}{0} U^*.
$$
Now we will prove $N N^\dag = (MC)^{k-1} [(MC)^{k-1}]^\dag$. Recall that we have
$N = (MC)^{k-1}M$, and so, $\mathcal{R}(N) \subseteq \mathcal{R}((MC)^{k-1})$. Since
$M$ is nonsingular, ${\rm rk}(N)={\rm rk}((MC)^{k-1})$, and
therefore, $\mathcal{R}(N) = \mathcal{R}((MC)^{k-1})$. Since
$NN^\dagger$ and $(MC)^{k-1} [(MC)^{k-1}]^\dag$ are the orthogonal projectors onto
$\mathcal{R}(N)$ and $\mathcal{R}((MC)^{k-1})$, respectively, we get
$NN^\dagger = (MC)^{k-1} [(MC)^{k-1}]^\dag$.

By (2.2) we have
\begin{equation}\label{cuatro}
A^D = U \mat{(MC)^D}{[(MC)^D]^2 MS}{0}{0} U^*.
\end{equation}

Thus, we have
$$
(A^D)^m = U \mat{[(MC)^D]^m}{[(MC)^D]^{m+1} MS}{0}{0} U^*.
$$
Since ${\rm ind}(A)=k$, we have $A^D A^{k+1}=A^k$. By using the above representations of
$A^D$ and $A^k$ given in (\ref{tres}) and (\ref{cuatro}), respectively,
$$
\mat{(MC)^D}{[(MC)^D]^2 MS}{0}{0} \mat{(MC)^{k+1}}{(MC)^kMS}{0}{0}
= \mat{(MC)^k}{(MC)^{k-1}MS}{0}{0}.
$$
Therefore,
\begin{equation}\label{cinco}
(MC)^D (MC)^k M [C \ | \ S] = (MC)^{k-1} M[ C \ | \ S].
\end{equation}
Have in mind that we have $C^2+SS^*=I_r$. Thus, postmultiplying (\ref{cinco}) by $\vv{C}{S^*}$
gives us $(MC)^D (MC)^k M = (MC)^{k-1} M$ and from the nonsningularity of $M$ we obtain
$(MC)^D (MC)^k = (MC)^{k-1}$, and so, ${\rm ind}(MC) \leq k-1$.
Therefore we have
\begin{equation*}
\begin{split}
A^{\oplus}_{k,m}=&(A^{D})^{m}A^{k}(A^{k})^{\dagger}\\
=&U \mat{[(MC)^{D}]^{m}}{[(MC)^{D}]^{m+1}MS}{0}{0}\mat{(MC)^{k-1}((MC)^{k-1})^{\dagger}}{0}{0}{0}U^*\\
=&U \mat{[(MC)^{D}]^{m}(MC)^{k-1}((MC)^{k-1})^{\dagger}}{0}{0}{0}U^*\\
=&U\mat{(MC)^{\oplus}_{k-1,m}}{0}{0}{0}U^{\ast}.
\end{split}
\end{equation*}
\end{proof}

\begin{remark}
If we use the decomposition of Hartwig and Spindelb\"{o}ck in \cite[Corollary 6]{HS}, then an expression of the $\bra{k}{m}$-core inverse of $A$
is $A^{\oplus}_{k,m}=U\mat{(\Sigma K)^{\oplus}_{k-1,m}}{0}{0}{0}U^{\ast}$, which is similar to the expression of $A^{\oplus}_{k,m}$ in Theorem~$\ref{csfoemulajj}$.
Since the proof of this result can be proved as the proof of
Theorem~$\ref{csfoemulajj}$, we omit this proof.
\end{remark}

Let $A\in \mathbb{C}^{n\times n}$ with ${\rm ind}(A)=k$. The Jordan Canonical form of $A$ is $P^{-1}AP=J$,
where $P\in \mathbb{C}^{n\times n}$ is nonsingular and $J\in \mathbb{C}^{n\times n}$ is a block diagonal matrix composed of Jordan blocks.
In the following theorem, we will compute the $\bra{k}{m}$-core inverse by using the Jordan Canonical form of $A$.

\begin{theorem} \label{jodanbiao}
Let $A\in \mathbb{C}^{n\times n}$ with ${\rm ind}(A)=k$, then $A^{\oplus}_{k,m}=P_{1}D^{-m}P_{1}^{\dagger}$, where
$A=P\mat{D}{0}{0}{N}P^{-1}$ with $D\in \mathbb{C}^{r\times r}$ is nonsingular, $N$ is nilpotent and $P=[P_{1} \ | \ P_{2}]$ with $P_{1}\in \mathbb{C}^{n\times r}$.
\end{theorem}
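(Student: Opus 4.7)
The plan is to apply Theorem~\ref{formulaj}, which gives the explicit formula $A^{\oplus}_{k,m}=(A^{D})^{m}A^{k}(A^{k})^{\dagger}$, and to reduce every factor on the right to a simple block form using the Jordan decomposition of $A$.

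First I would partition $P=[P_{1} \ | \ P_{2}]$ as in the statement and write $P^{-1}=\lie{Q_{1}}{Q_{2}}$ with $Q_{1}\in \mathbb{C}^{r\times n}$. The identity $P^{-1}P=I_{n}$ immediately forces $Q_{1}P_{1}=I_{r}$, which is the key algebraic cancellation used at the end. From the block decomposition, since $D$ is invertible and $N$ is nilpotent with $N^{k}=0$ (because ${\rm ind}(A)=k$), the standard block computation yields $A^{k}=P_{1}D^{k}Q_{1}$, $A^{D}=P_{1}D^{-1}Q_{1}$, and hence $(A^{D})^{m}=P_{1}D^{-m}Q_{1}$ for all $m\in \mathbb{N}$ (either by direct block multiplication or by iterating the previous identity and using $Q_{1}P_{1}=I_{r}$).

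Next I would identify $A^{k}(A^{k})^{\dagger}$ as the orthogonal projector onto $\mathcal{R}(A^{k})$. Since $P_{1}$ has full column rank $r$ and $D^{k}$ is invertible, $\mathcal{R}(A^{k})=\mathcal{R}(P_{1}D^{k})=\mathcal{R}(P_{1})$, and the orthogonal projector onto $\mathcal{R}(P_{1})$ is by definition $P_{1}P_{1}^{\dagger}$. Hence $A^{k}(A^{k})^{\dagger}=P_{1}P_{1}^{\dagger}$. Alternatively, one could regard $A^{k}=(P_{1}D^{k})Q_{1}$ as a full rank factorization and apply formula $(\ref{mpfullrank})$ to compute $(A^{k})^{\dagger}$ explicitly; the resulting expression for $A^{k}(A^{k})^{\dagger}$ collapses to $P_{1}P_{1}^{\dagger}$ after the cancellations involving $D^{k}$, $(D^{k})^{\ast}$ and $(Q_{1}Q_{1}^{\ast})^{-1}$.

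Finally, concatenating the pieces gives $A^{\oplus}_{k,m}=(A^{D})^{m}A^{k}(A^{k})^{\dagger}=P_{1}D^{-m}Q_{1}P_{1}P_{1}^{\dagger}=P_{1}D^{-m}P_{1}^{\dagger}$, with a single use of $Q_{1}P_{1}=I_{r}$. The only step that is not a pure block multiplication is the projector identification $A^{k}(A^{k})^{\dagger}=P_{1}P_{1}^{\dagger}$; once that is in place the remainder is essentially two lines, so I do not anticipate any serious obstacle.
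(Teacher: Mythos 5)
Your proposal is correct and follows essentially the same route as the paper: both invoke Theorem~\ref{formulaj} to write $A^{\oplus}_{k,m}=(A^{D})^{m}A^{k}(A^{k})^{\dagger}$ and then reduce each factor via the block decomposition $A=P\mat{D}{0}{0}{N}P^{-1}$. The only difference is cosmetic: you identify $A^{k}(A^{k})^{\dagger}$ directly as the orthogonal projector $P_{1}P_{1}^{\dagger}$ onto $\mathcal{R}(A^{k})=\mathcal{R}(P_{1})$, whereas the paper computes $(A^{k})^{\dagger}=Q_{1}^{\dagger}D^{-k}P_{1}^{\dagger}$ explicitly from the full rank factorization $A^{k}=(P_{1}D^{k})Q_{1}$ and cancels using $Q_{1}Q_{1}^{\dagger}=I_{r}$ --- your shortcut is the cleaner of the two and you even note the paper's variant as an alternative.
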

\begin{proof}
The Jordan Canonical form of $A$ is $P^{-1}AP=J$,
where $P\in \mathbb{C}^{n\times n}$ is nonsingular and $J\in \mathbb{C}^{n\times n}$ is a block diagonal matrix.
Rearrange the elements of $J$ such that $A=P\mat{D}{0}{0}{N}P^{-1}$, where $D$ is nonsingular and $N$ is nilpotent.
It is well-known that $A^D=P\mat{D^{-1}}{0}{0}{0}P^{-1}$ and $A^{k}=P\mat{D^k}{0}{0}{0}P^{-1}$. If we let $P=[P_{1} \ | \ P_{2}]$
and $P^{-1}=\lie{Q_{1}}{Q_{2}}$, then
$(A^D)^{m}A^{k}=[P_{1} \ | \ P_{2}]\mat{(D^{-1})^{m}}{0}{0}{0}\mat{D^{k}}{0}{0}{0}\lie{Q_{1}}{Q_{2}}=P_{1}D^{k-m}Q_{1}$.
Observe that $A^k = (P_1D^k)Q_1$ is a full rank factorization
of $A^k$. Hence by (\ref{mpfullrank}) we have
\begin{equation*}
\begin{split}
(A^{k})^{\dagger}=&(P_{1}D^kQ_{1})^{\dagger}=Q_{1}^{\ast}(Q_{1}Q_{1}^{\ast})^{-1}[(P_{1}D^k)^{\ast}P_{1}D^k]^{-1}(P_{1}D^k)^{\ast}\\
=&Q_{1}^{\ast}(Q_{1}Q_{1}^{\ast})^{-1}D^{-k}(P_{1}^{\ast}P_{1})^{-1}[(D^k)^{\ast}]^{-1}(D^k)^{\ast}P_{1}^{\ast}\\
=&Q_{1}^{\ast}(Q_{1}Q_{1}^{\ast})^{-1}D^{-k}(P_{1}^{\ast}P_{1})^{-1}P_{1}^{\ast}\\
=&Q_{1}^{\dagger}D^{-k}P_{1}^{\dagger}.
\end{split}
\end{equation*}
By Theorem \ref{formulaj}, we have $A^{\oplus}_{k,m}=(A^{D})^{m}A^{k}(A^{k})^{\dagger}$. Thus we have
\begin{equation*}
\begin{split}
A^{\oplus}_{k,m}=&(A^{D})^{m}A^{k}(A^{k})^{\dagger}
=P_{1}D^{k-m}Q_{1}Q_{1}^{\dagger}D^{-k}P_{1}^{\dagger}
=P_{1}D^{k-m}Q_{1}Q_{1}^{\ast}(Q_{1}Q_{1}^{\ast})^{-1}D^{-k}P_{1}^{\dagger}\\
=&P_{1}D^{-m}D^{k}D^{-k}P_{1}^{\dagger}
=P_{1}D^{-m}P_{1}^{\dagger}.
\end{split}
\end{equation*}
\end{proof}

\begin{proposition}\label{propjectormkcorej}
Let $A\in \mathbb{C}^{n\times n}$. If $i\geq {\rm ind}(A)$, then $A^mA^{\oplus}_{i,m}$ is the projector onto $\rr{(A^{i})}$ along $\rr{(A^{i})}^{\bot}$.
\end{proposition}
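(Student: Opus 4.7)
The plan is to reduce the claim to recognizing a standard Moore--Penrose projector. By Theorem~\ref{formulaj}, the hypothesis $i\geq {\rm ind}(A)$ guarantees that $A^{\oplus}_{i,m}$ exists and is given by $A^{\oplus}_{i,m}=(A^D)^m A^i(A^i)^\dagger$. So first I would substitute this formula and compute
\begin{equation*}
A^m A^{\oplus}_{i,m} = A^m (A^D)^m A^i(A^i)^\dagger.
\end{equation*}

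Next I would simplify the prefix $A^m(A^D)^m$. Using the standard Drazin identities $A A^D = A^D A$ and $(AA^D)^2 = AA^D$, one obtains $A^m(A^D)^m = AA^D$ by an easy induction (all the middle factors telescope). Then I would use $i\geq {\rm ind}(A)$, which implies $AA^D A^i = A^i$ (this is just $A^{i+1}A^D = A^i$, a defining property of the Drazin inverse when $i$ is at least the index). Putting these together yields
\begin{equation*}
A^m A^{\oplus}_{i,m} = A A^D A^i (A^i)^\dagger = A^i(A^i)^\dagger.
\end{equation*}

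The final step is to identify $A^i(A^i)^\dagger$ as the desired projector. By the defining properties of the Moore--Penrose inverse, $A^i(A^i)^\dagger$ is Hermitian and idempotent with range $\mathcal{R}(A^i)$; hence it is the orthogonal projector onto $\mathcal{R}(A^i)$, which is precisely the projector onto $\mathcal{R}(A^i)$ along $\mathcal{R}(A^i)^\perp$.

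I do not expect any real obstacle: the proof is essentially bookkeeping once Theorem~\ref{formulaj} is applied. The only point that requires a line of justification is the reduction $A^m(A^D)^m A^i = A^i$, but this is immediate from the standard Drazin axioms together with the index hypothesis $i\geq {\rm ind}(A)$.
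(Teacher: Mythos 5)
Your proof is correct, and it agrees with the paper, which simply declares the statement trivial. Note that the middle computation is not needed: the identity $A^m A^{\oplus}_{i,m}=A^i(A^i)^\dagger$ is literally the second defining equation of the $\bra{i}{m}$-core inverse in Definition~\ref{kcoredefj}, so the whole proof reduces to your final observation that $A^i(A^i)^\dagger$ is the orthogonal projector onto $\rr(A^i)$.
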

\begin{proof}
It is trivial.
%By Theorem~\ref{formulaj} we have
%\begin{equation*} %\label{kcoreqeuak}
%\begin{split}
%A^{m}A^{\oplus}_{i,m}A^{m}A^{\oplus}_{i,m}=&A^{m}(A^{D})^{m}A^{i}(A^{i})^{\dagger}A^{m}(A^{D})^{m}A^{i}(A^{i})^{\dagger}\\
%=&AA^{D}A^{i}(A^{i})^{\dagger}AA^{D}A^{i}(A^{i})^{\dagger}
%=AA^{D}A^{i}(A^{i})^{\dagger}A^{i}AA^{D}(A^{i})^{\dagger}\\
%=&AA^{D}A^{i}AA^{D}(A^{i})^{\dagger}
%=AA^{D}AA^{D}A^{j}(A^{i})^{\dagger}\\
%=&AA^{D}A^{i}(A^{i})^{\dagger}
%=A^{m}(A^{D})^{m}A^{i}(A^{i})^{\dagger}=A^{m}A^{\oplus}_{i,m}.
%\end{split}
%\end{equation*}
%Since $A^{m}A^{\oplus}_{i,m}=A^{i}(A^{i})^{\dagger}$ by the definition of the ${\rm \langle i,m\rangle}$-core inverse, thus
%$\rr{(A^{m}A^{\oplus}_{i,m})}=\rr{(A^{i}(A^{i})^{\dagger})}=\rr{(A^{i})}$ and $\kk{(A^{m}A^{\oplus}_{i,m})}=\kk{(A^{i}(A^{i})^{\dagger})}=\kk{((A^{\ast})^{i})}
%=\kk{((A^{i})^{\ast})}=\rr{(A^{i})}^{\bot}$.
\end{proof}

In the following proposition, we will investigate some properties of the $\bra{i}{m}$-core inverse.
\begin{proposition}\label{proposiofmkcorej}
Let $A\in \mathbb{C}^{n\times n}$, $m,i\in\mathbb{N}$. If $i\geq {\rm ind}(A)$, then
\begin{itemize}
\item[{\rm (1)}] $A^{\oplus}_{i,m}$ is a $\{2,3\}$-inverse of $A^{m}$;
\item[{\rm (2)}] $A^{\oplus}_{i,m}=(A^{D})^{m}P_{A^{i}}$;
\item[{\rm (3)}] $(A^{\oplus}_{i,m})^{n}=(A^{D})^{m(n-1)}P_{A^{i}}$;
\item[{\rm (4)}] $A^{i}A^{\oplus}_{i,m}=A^{\oplus}_{i,m}A^{i}$ if and only if $\rr{(A^{i})}^{\bot}\subseteq \kk{((A^{D})^{m})}$;
\item[{\rm (5)}] $A^{\oplus}_{i,m}=A$ implies that $A$ is EP.
\end{itemize}
\end{proposition}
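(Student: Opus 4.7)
The plan is to use Theorem~\ref{formulaj} as the single engine for all five items, via the formula $A^{\oplus}_{i,m} = (A^D)^m A^i (A^i)^\dagger = (A^D)^m P_{A^i}$, valid because $i \geq {\rm ind}(A)$. Item~(2) is then an immediate rewriting. For (1), I would compute $A^m A^{\oplus}_{i,m}$ by collapsing $A^m (A^D)^m$ to $AA^D$ (commutativity of $A$ with $A^D$ together with idempotence of $AA^D$) and applying $AA^D A^i = A^i$ to get $A^m A^{\oplus}_{i,m} = P_{A^i}$; Hermiticity of $P_{A^i}$ verifies the $\{3\}$-condition, while the $\{2\}$-condition follows from $A^{\oplus}_{i,m} A^m A^{\oplus}_{i,m} = A^{\oplus}_{i,m} P_{A^i} = (A^D)^m P_{A^i}^2 = A^{\oplus}_{i,m}$ via idempotence of $P_{A^i}$.

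For (3), I would induct on $n$, using the key identity $P_{A^i} A^{\oplus}_{i,m} = A^{\oplus}_{i,m}$. This holds because $\rr(A^{\oplus}_{i,m}) \subseteq \rr((A^D)^m) = \rr(A^k) = \rr(A^i)$ and $P_{A^i}$ fixes that subspace. The inductive step then rewrites $(A^{\oplus}_{i,m})^n = (A^D)^m P_{A^i} \cdot (A^{\oplus}_{i,m})^{n-1}$ and strips off $P_{A^i}$ against $(A^{\oplus}_{i,m})^{n-1}$ (whose range again sits inside $\rr(A^i)$), leaving $(A^D)^m (A^{\oplus}_{i,m})^{n-1}$; iterating and absorbing the last factor through (2) yields the asserted power formula.

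For (4), I would expand both sides using the formula from (2). On one side, $A^{\oplus}_{i,m} A^i = (A^D)^m P_{A^i} A^i = (A^D)^m A^i$ because $P_{A^i}$ fixes $\rr(A^i)$. On the other, the commutativity $A^i A^D = A^D A^i$ gives $A^i A^{\oplus}_{i,m} = (A^D)^m A^i P_{A^i}$. Equality therefore reduces to $(A^D)^m A^i (I - P_{A^i}) = 0$, i.e.\ $\rr(A^i)^\perp \subseteq \kk((A^D)^m A^i)$. The hard part will be identifying this kernel with $\kk((A^D)^m)$: I would invoke the core/nilpotent splitting $\mathbb{C}^n = \rr(A^i) \oplus \kk((A^D)^m)$ associated with the Drazin inverse to conclude that any $v$ with $(A^D)^m A^i v = 0$ must satisfy $A^i v \in \rr(A^i) \cap \kk((A^D)^m) = \{0\}$, and hence $v \in \kk(A^i) = \kk((A^D)^m)$; the reverse inclusion is immediate.

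For (5), suppose $A = A^{\oplus}_{i,m}$. First I would observe that $\rr(A) \subseteq \rr((A^D)^m) = \rr(A^k)$; combined with the always-valid $\rr(A^k) \subseteq \rr(A)$ this forces $\rr(A) = \rr(A^k)$, so ${\rm ind}(A) \leq 1$ and $A$ is group invertible, whence $\rr(A^s) = \rr(A)$ for every $s \geq 1$. Then applying (1) with $A^{\oplus}_{i,m} = A$ shows $A^{m+1} = A^m A^{\oplus}_{i,m}$ is Hermitian, so $\rr(A) = \rr(A^{m+1}) = \rr((A^{m+1})^*) = \rr((A^*)^{m+1}) = \rr(A^*)$, exhibiting $A$ as EP. The main obstacle across the whole proposition is the kernel identification in (4); everything else is short manipulation of $A^{\oplus}_{i,m} = (A^D)^m P_{A^i}$.
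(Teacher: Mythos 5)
Your overall strategy---reduce everything to $A^{\oplus}_{i,m}=(A^D)^m A^i(A^i)^\dagger=(A^D)^m P_{A^i}$ from Theorem~\ref{formulaj} and manipulate---is exactly the paper's strategy for (1), (2) and (4), and your arguments there are correct; in (4) you even make explicit the kernel identification $\mathcal{N}((A^D)^m A^i)=\mathcal{N}((A^D)^m)$ via the core--nilpotent splitting, which the paper's chain of equivalences uses but does not justify.

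There is, however, a real mismatch in (3). Your iteration produces $(A^{\oplus}_{i,m})^n=(A^D)^{m(n-1)}A^{\oplus}_{i,m}=(A^D)^{mn}P_{A^i}$---which is correct, and is also what the paper's own recursion $(A^{\oplus}_{i,m})^2=(A^D)^m A^{\oplus}_{i,m}$ gives---but this is \emph{not} the formula $(A^D)^{m(n-1)}P_{A^i}$ asserted in item (3). Already for $n=1$ the asserted right-hand side is $P_{A^i}$ rather than $A^{\oplus}_{i,m}$, and for $A=2I$, $m=i=1$ one gets $(A^{\oplus}_{1,1})^n=2^{-n}I$ versus the asserted $2^{-(n-1)}I$ for every $n$. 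So you cannot say your computation ``yields the asserted power formula'': either the statement contains a misprint (the exponent should be $mn$, equivalently the last factor should be $A^{\oplus}_{i,m}$ instead of $P_{A^i}$), in which case your proof is fine once you record the corrected identity, or the statement is simply false. You should flag this explicitly rather than assert agreement.

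For (5) you take a genuinely different, and more elementary, route than the paper. The paper invokes the canonical form of Theorem~\ref{csfoemulajj}: writing $A$ as in (\ref{cs}), the equality $A=A^{\oplus}_{i,m}$ forces $MS=0$, hence $S=0$, hence $A$ is EP by \cite[Theorem 3.7]{B}. You instead note that $\mathcal{R}(A)\subseteq\mathcal{R}((A^D)^m)=\mathcal{R}(A^k)$ forces $\rk(A)=\rk(A^2)$, so ${\rm ind}(A)\le 1$, and then that $A^{m+1}=A^mA^{\oplus}_{i,m}=A^i(A^i)^\dagger$ is Hermitian, whence $\mathcal{R}(A)=\mathcal{R}(A^{m+1})=\mathcal{R}((A^*)^{m+1})=\mathcal{R}(A^*)$. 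This is correct and avoids the unitary decomposition entirely; the paper's version buys only uniformity with the surrounding canonical-form computations.
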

\begin{proof}
$(1)$. By Theorem~\ref{formulaj} we have $A^{\oplus}_{i,m}=(A^{D})^{m}A^{i}(A^{i})^{\dagger}$, thus
\begin{equation*}
\begin{split}
A^{\oplus}_{i,m}A^{m}A^{\oplus}_{i,m}=&(A^{D})^{m}A^{i}(A^{i})^{\dagger}A^{m}(A^{D})^{m}A^{i}(A^{i})^{\dagger}
=(A^{D})^{m}A^{i}(A^{i})^{\dagger}A^{i}A^{m}(A^{D})^{m}(A^{i})^{\dagger}\\
=&(A^{D})^{m}A^{i}A^{m}(A^{D})^{m}(A^{i})^{\dagger}
=(A^{D})^{m}A^{m}(A^{D})^{m}A^{i}(A^{i})^{\dagger}\\
=&A^{D}A(A^{D})^{m}A^{i}(A^{i})^{\dagger}
=(A^{D})^{m}A^{i}(A^{i})^{\dagger}
=A^{\oplus}_{i,m}.
\end{split}
\end{equation*}
Thus $A^{\oplus}_{i,m}$ is a $\{2,3\}$-inverse of $A^{m}$ in view of $A^{m}A^{\oplus}_{i,m}=A^{i}(A^{i})^{\dagger}$.

$(2)$ is trivial.

$(3)$. By $(A^{\oplus}_{i,m})^{2}=(A^{D})^{m}A^{i}(A^{i})^{\dagger}(A^{D})^{m}A^{i}(A^{i})^{\dagger}
=(A^{D})^{m}(A^{D})^{m}A^{i}(A^{i})^{\dagger}=(A^{D})^{m}A^{\oplus}_{i,m}$ and induction it is easy to check $(3)$.

$(4)$.  By $\rr{(I_{n}-A^{i}(A^{i})^{\dagger})}=\kk{((A^{i})^{\dagger})}$, we have
\begin{equation*}
\begin{split}
A^{i}A^{\oplus}_{i,m}=A^{\oplus}_{i,m}A^{i}~~
\Leftrightarrow&~~A^{i}(A^{D})^{m}A^{i}(A^{i})^{\dagger}=(A^{D})^{m}A^{i}(A^{i})^{\dagger}A^{i}\\
\Leftrightarrow&~~A^{i}(A^{D})^{m}A^{i}(A^{i})^{\dagger}=(A^{D})^{m}A^{i}\\
\Leftrightarrow&~~A^{i}(A^{D})^{m}(I_{n}-A^{i}(A^{i})^{\dagger})=0\\
\Leftrightarrow&~~\rr{(I_{n}-A^{i}(A^{i})^{\dagger})}\subseteq \kk{(A^{i}(A^{D})^{m})}\\
\Leftrightarrow&~~\kk{((A^{i})^{\dagger})}\subseteq \kk{((A^{D})^{m})}\\
\Leftrightarrow&~~\kk{((A^{i})^{\ast})}\subseteq \kk{((A^{D})^{m})}\\
\Leftrightarrow&~~\rr{(A^{i})}^{\bot}\subseteq \kk{((A^{D})^{m})}.
\end{split}
\end{equation*}

$(5)$. Let $A$ be written in the form $(\ref{cs})$.
We have $A^{\oplus}_{i,m}=U\mat{(MC)^{\oplus}_{i-1,m}}{0}{0}{0}U^{\ast}$ by Theorem~\ref{csfoemulajj}.
Thus, $A^{\oplus}_{i,m}=A$ implies $MS=0.$ From the nonsingularity of $M$, we have
$S=0$, which is equivalent to say that $A$ is EP in view of \cite[Theorem 3.7]{B}.

\end{proof}
%%%%%%%%%%%%%%%%%%%%%%%%%%%%%%%%%%%%%%%%%%%%%%%%%%%%%%%%%%%%%%%%%%%%%%%%%%%%%%%%%%%%%%%%%%%%%%%%%%%%%%%%%%%%%%%%%%%%%%%%%%%%%%%%%%%%%%%%%%%%%%%%%%%%%%%%%%%%%%%%%%%%%%%%%%%%%%%%%%%%
%%%%%%%%%%%%%%%%%%%%%%%%%%%%%%%%%%%%%%%%%%%%%%%%%%%%%%%%%%%%%%%%%%%%%%%%%%%%%%%%%%%%%%%%%%%%%%%%%%%%%%%%%%%%%%%%%%%%%%%%%%%%%%%%%%%%%%%%%%%%%%%%%%%%%%%%%%%%%%%%%%%%%%%%%%%%%%%%%%%%
%%%%%%%%%%%%%%%%%%%%%%%%%%%%%%%%%%%%%%%%%%%%%%%%%%%%%%%%%%%%%%%%%%%%%%%%%%%%%%%%%%%%%%%%%%%%%%%%%%%%%%%%%%%%%%%%%%%%%%%%%%%%%%%%%%%%%%%%%%%%%%%%%%%%%%%%%%%%%%%%%%%%%%%%%%%%%%%%%%%%
%%%%%%%%%%%%%%%%%%%%%%%%%%%%%%%%%%%%%%%%%%%%%%%%%%%%%%%%%%%%%%%%%%%%%%%%%%%%%%%%%%%%%%%%%%%%%%%%%%%%%%%%%%%%%%%%%%%%%%%%%%%%%%%%%%%%%%%%%%%%%%%%%%%%%%%%%%%%%%%%%%%%%%%%%%%%%%%%%%%%
\section { \bf $\pare{j}{m}$-core inverse}
Let us start this section by introducing the definition of the $\pare{j}{m}$-core inverse.

\begin{definition} \label{kcoredef}
Let $A\in \mathbb{C}^{n\times n}$ and $m,j\in\mathbb{N}$. A matrix $X\in \mathbb{C}^{n\times n}$ is called a
$\pare{j}{m}$-core inverse of $A$, if it satisfies
\begin{equation} \label{kcoredefi}
X=A^{D}AX~~\text{and}~~A^{m}X=A^{m}(A^{j})^{\dagger}.
\end{equation}
\end{definition}

\begin{theorem} \label{uniquemkcorea}
Let $A\in \mathbb{C}^{n\times n}$. If the system in \emph{(\ref{kcoredefi})} is consistent, then the solution is unique.
\end{theorem}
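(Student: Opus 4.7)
The plan is to mimic the short argument used for Theorem~\ref{formulaja}: iterate the first equation $X = A^D A X$ to pull out $m$ copies of $A^D$ and $A$, then substitute the second equation $A^m X = A^m (A^j)^{\dagger}$ to eliminate $X$ from the right-hand side entirely. Concretely, I would first show by induction that $X = (A^D)^\ell A^\ell X$ for every $\ell \in \mathbb{N}$, using the identity $A^D A \cdot A^D = (A^D)^2 A$ (i.e.\ the fact that $A^D$ commutes with $A$) at each step.

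Taking $\ell = m$ then yields
\[
X = (A^D)^m A^m X = (A^D)^m A^m (A^j)^{\dagger}.
\]
Since $A^D$, $A^m$, and $(A^j)^{\dagger}$ are all uniquely determined by $A$ (the Drazin inverse and the Moore--Penrose inverse of any matrix are unique), the right-hand side is a fixed matrix, so any solution $X$ of the system~(\ref{kcoredefi}) must equal this fixed matrix. Hence the solution, if it exists, is unique.

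There is essentially no obstacle: the argument is a direct two-line computation parallel to the one given for Theorem~\ref{formulaja}, with $A^i(A^i)^{\dagger}$ replaced by $A^m (A^j)^{\dagger}$. The only point to be mildly careful about is the inductive step, where one uses that $A$ and $A^D$ commute in order to rewrite $A^D A X = A^D A (A^D A X) = (A^D)^2 A^2 X$, but this is standard and requires no ad hoc device.
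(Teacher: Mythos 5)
Your proposal is correct and follows essentially the same route as the paper: iterate $X=A^DAX$ (using that $A$ and $A^D$ commute) to get $X=(A^D)^mA^mX$, then substitute $A^mX=A^m(A^j)^{\dagger}$ to express $X$ as the fixed matrix $(A^D)^mA^m(A^j)^{\dagger}=A^DA(A^j)^{\dagger}$. No issues.
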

\begin{proof}
Assume that $X$ satisfies that (\ref{kcoredefi}), that is $X=A^{D}AX$ and $A^{m}X=A^{m}(A^{j})^{\dagger}$. Then
$X=A^{D}AX=(A^{D})^{m}A^{m}X=(A^{D})^{m}A^{m}(A^{j})^{\dagger}=A^{D}A(A^{j})^{\dagger}.$ Thus $X$ is unique.
\end{proof}

By Theorem~\ref{uniquemkcorea} if $X$ exists, then it is unique and denoted by $A^{\ominus}_{j,m}$.

\begin{theorem} \label{uniquemkcore}
Let $A\in \mathbb{C}^{n\times n}$ and $m,j\in\mathbb{N}$. Then
\begin{itemize}
\item[{\rm (1)}] If $m\geq {\rm ind}(A)$, then the system in \emph{(\ref{kcoredefi})} is consistent and the solution is $X=A^{D}A(A^{j})^{\dagger}$;
\item[{\rm (2)}] If the system in \emph{(\ref{kcoredefi})} is consistent, then ${\rm ind}(A)\leq \max\{j,m\}$.
\end{itemize}
\end{theorem}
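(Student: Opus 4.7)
The plan is to handle the two parts separately: (1) is a routine verification, and (2) is where the real content lies.

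For (1), assume $m\ge k:={\rm ind}(A)$ and take $X:=A^{D}A(A^{j})^{\dagger}$ as the candidate solution. Both equations in (\ref{kcoredefi}) should drop out from two elementary facts: the idempotency $(A^{D}A)^{2}=A^{D}A$ (which comes from $A^{D}AA^{D}=A^{D}$ combined with $A^{D}A=AA^{D}$), and the Drazin identity $A^{D}A^{k+1}=A^{k}$, iterated to give $A^{D}A^{m+1}=A^{m}$ whenever $m\ge k$. The first identity gives $A^{D}AX=(A^{D}A)^{2}(A^{j})^{\dagger}=A^{D}A(A^{j})^{\dagger}=X$, while $A^{m}X=A^{D}A^{m+1}(A^{j})^{\dagger}=A^{m}(A^{j})^{\dagger}$ after commuting $A^{D}$ past $A^{m}$ and applying the iterated Drazin identity.

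For (2), I first invoke Theorem~\ref{uniquemkcorea}, whose proof already forces the solution to be $X=A^{D}A(A^{j})^{\dagger}$. Substituting this into $A^{m}X=A^{m}(A^{j})^{\dagger}$ and simplifying with $A^{D}A=AA^{D}$ reduces the claim to the single matrix identity
\[
(I-AA^{D})A^{m}(A^{j})^{\dagger}=0.
\]
Setting $s:=\max\{j,m\}$, the plan is a two-step cleanup. Pre-multiplying by $A^{s-m}$ (valid because $s\ge m$) and sliding $A^{s-m}$ past the commuting factor $I-AA^{D}$ yields $(I-AA^{D})A^{s}(A^{j})^{\dagger}=0$. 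Post-multiplying by $A^{j}$, then using $s\ge j$ to factor $A^{s}=A^{s-j}A^{j}$ and applying the Moore--Penrose identity $A^{j}(A^{j})^{\dagger}A^{j}=A^{j}$, collapses $A^{s}(A^{j})^{\dagger}A^{j}$ back to $A^{s}$. The net outcome is $(I-AA^{D})A^{s}=0$, i.e.\ $A^{s}=AA^{D}A^{s}$, which is exactly the statement ${\rm ind}(A)\le s$.

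The main obstacle is recognising the double role of $s=\max\{j,m\}$: it has to be large enough on both sides of the equation --- above $m$ so that $A^{s-m}$ can be fed in on the left, and above $j$ so that the Moore--Penrose identity cleans up $A^{s}$ on the right. Once this symmetry is spotted, the proof is a short calculation that uses only the Drazin--Moore--Penrose commutations already recorded earlier in the paper, with no need to invoke the canonical form from Theorem~\ref{CS} or any Jordan decomposition.
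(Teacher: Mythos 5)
Your proof is correct and follows essentially the same route as the paper: part (1) is the identical direct verification, and part (2) rests on the same manipulations (consistency forces $X=A^{D}A(A^{j})^{\dagger}$, post-multiplication by $A^{j}$ invokes $A^{j}(A^{j})^{\dagger}A^{j}=A^{j}$, and $A$ is commuted past $AA^{D}$). The only cosmetic difference is that you unify the paper's two cases $m\geq j$ and $j>m$ into a single computation via $s=\max\{j,m\}$.
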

\begin{proof}
$(1)$. Let $X=A^{D}A(A^{j})^{\dagger}$. We have $A^DAX=A^DAA^DA(A^{j})^{\dagger}=A^DA(A^{j})^{\dagger}=X$ and
$A^mX=A^mA^DA(A^{j})^{\dagger}=A^DAA^m(A^{j})^{\dagger}=A^{m}(A^{j})^{\dagger}$.

$(2)$.  If the system in (\ref{kcoredefi}) is consistent, then exits $X_{0}\in \mathbb{\mathbb{C}}^{n\times n}$
such that $X_{0}=A^{D}AX_{0}=(A^{D})^{m}A^{m}X_{0}=(A^{D})^{m}A^{m}(A^{j})^{\dagger}=A^{D}A(A^{j})^{\dagger}$ and
$A^{m}(A^{j})^{\dagger}=A^{m}X_{0}=A^{m}A^{D}A(A^{j})^{\dagger}=A^{m}(A^{D})^{j}A^{j}(A^{j})^{\dagger}$.
Thus $$A^m(A^{j})^{\dagger}A^{j}=A^{m}(A^{D})^{j}A^{j}(A^{j})^{\dagger}A^{j}=A^{m}(A^{D})^{j}A^{j}=A^{m}A^{D}A.$$
If $m\geq j$, then $A^{m}A^{D}A=A^m(A^{j})^{\dagger}A^{j}=A^{m-j}A^{j}(A^{j})^{\dagger}A^{j}=A^{m-j}A^{j}=A^{m}$. That is ${\rm ind}(A)\leq m$.
If $j> m$, then $A^j=A^{j}(A^{j})^{\dagger}A^{j}=A^{j-m}A^m(A^{j})^{\dagger}A^{j}=A^{j-m}A^{m}A^{D}A=A^j A^D A$. That is ${\rm ind}(A)\leq j$.
Therefore, ${\rm ind}(A)\leq \max\{j,m\}$.
\end{proof}

\begin{example}
We will give an example that shows if $m< {\rm ind}(A)$, then the system in $(\ref{kcoredefi})$ is not consistent.
Let $A$ be the same matrix in Example~$\ref{exmpmja}$. It is easy to get ${\rm ind}(A)=2$ and $A^D=0$. Let $m=j=1$
and suppose that $X$ is the solution of system in $\ref{kcoredefi}$, then $X=A^DAX=0$, which gives $AA^\dag=AX=0$,
thus $A=AA^\dag A=0$, this is a contradiction.
\end{example}

\begin{example}
The converse of Theorem~$\ref{uniquemkcore}$ $(1)$ is not true.
Let $m=1$ and $j=3$. If we let
$A=
\left[ \begin{array}{ccc}
0               & 1             &     0 \\
0               & 0             &     1\\
0               & 0             &     0
\end{array} \right]$,
then ${\rm ind}(A)=3$ and $A^3=0$. Hence $X=0$ is a solution of $(\ref{kcoredefi})$, but $m< {\rm ind}(A)$.
\end{example}

\begin{example}
If ${\rm ind}(A)\leq \max\{j,m\}$, then the system in $(\ref{kcoredefi})$ may be not consistent.
If we let
$A=
\left[ \begin{array}{ccc}
2               & 2             &     1 \\
-1              & -1            &     0\\
0               & 0             &     0
\end{array} \right]$,
then
$A^3=A^2=
\left[ \begin{array}{ccc}
2               & 2             &     2 \\
-1              & -1            &     -1\\
0               & 0             &     0
\end{array} \right]$,
$A^D=A^{2}$ and ${\rm ind}(A)=2$.
Let $m=1$ and $j=2$, then ${\rm ind}(A)\leq \max\{j,m\}$.
It is easy to check that
$(A^2)^{\dagger}=\frac{1}{15}
\left[ \begin{array}{ccc}
2               & -1             &     0\\
2               & -1             &     0\\
2               & -1             &     0
\end{array} \right]$.
If the system in $(\ref{kcoredefi})$ has a solution $X_{0}$, then
$X_{0}=A^DAX_{0}=A^DA(A^2)^{\dagger}$ and $A(A^2)^{\dagger}=AX_{0}=AA^DA(A^2)^{\dagger}=A^{4}(A^2)^{\dagger}=A^{2}(A^2)^{\dagger}$ would hold. But
$A(A^2)^{\dagger}=\frac{1}{15}
\left[ \begin{array}{ccc}
10               & -5             &     0\\
-4               & 2              &     0\\
0                & 0              &     0
\end{array} \right]
\neq
\frac{1}{15}
\left[ \begin{array}{ccc}
12               & -6             &     0\\
-6               & 3              &     0\\
0                & 0              &     0
\end{array} \right]=A^{2}(A^2)^{\dagger}$.
Thus, the system in $(\ref{kcoredefi})$ is not consistent.
\end{example}

\begin{remark}
If $m\geq {\rm ind}(A)=k$, it is not difficult to see that $A^{\ominus}_{j,m}=A^{\ominus}_{j,m+1}$.
That is to say, the $\pare{j}{m}$-core inverse of $A$ coincides with the $\pare{j}{m+1}$-core inverse of $A$.
Thus, in the sequel, we only discuss the $m={\rm ind}(A)$ case.
\end{remark}

\begin{theorem} \label{equavalentcona}
Let $A, X\in \mathbb{C}^{n\times n}$, $k,j\in\mathbb{N}$. If ${\rm ind}(A)=k$
and $X$ is the $\pare{j}{k}$-core inverse of $A$, then we have $X^{j}A^{j}X^{j}=(A^{D})^{j(j-1)}X^{j}$ and $XA^{j}=A^{D}A$.
\end{theorem}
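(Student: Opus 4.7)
The plan is to exploit the closed form $X = A^{D}A(A^{j})^{\dagger}$, which follows at once from the defining relations $X = A^{D}AX$ and $A^{k}X = A^{k}(A^{j})^{\dagger}$ by multiplying the latter on the left by $(A^{D})^{k}$ and using $(A^{D})^{k}A^{k} = (A^{D}A)^{k} = A^{D}A$; since $m = k \geq {\rm ind}(A)$, this representation is available by Theorem~\ref{uniquemkcore}(1).

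For $XA^{j} = A^{D}A$, I would first establish the auxiliary identity $A^{k}(A^{j})^{\dagger}A^{j} = A^{k}$. The relation $A^{m}(A^{j})^{\dagger} = A^{m}(A^{D})^{j}A^{j}(A^{j})^{\dagger}$ obtained in the proof of Theorem~\ref{uniquemkcore}(2), post-multiplied by $A^{j}$ and simplified via $(A^{D})^{j}A^{j} = A^{D}A$, gives $A^{m}(A^{j})^{\dagger}A^{j} = A^{m}A^{D}A$; taking $m = k$ and using $A^{k+1}A^{D} = A^{k}$ collapses the right side to $A^{k}$. Then
\begin{equation*}
XA^{j} = A^{D}A(A^{j})^{\dagger}A^{j} = (A^{D})^{k}A^{k}(A^{j})^{\dagger}A^{j} = (A^{D})^{k}A^{k} = A^{D}A.
\end{equation*}

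The more delicate part is $X^{j}A^{j}X^{j} = (A^{D})^{j(j-1)}X^{j}$. The pivot is the auxiliary identity $X^{2} = (A^{D})^{j}X$: post-multiplying $XA^{j} = A^{D}A$ by $(A^{D})^{j}$ and using the commutativity $A^{j}(A^{D})^{j} = (AA^{D})^{j} = A^{D}A$ yields $XA^{D}A = (A^{D})^{j+1}A$; combining with $X = A^{D}AX$ gives
\begin{equation*}
X^{2} = X(A^{D}AX) = (XA^{D}A)X = (A^{D})^{j+1}AX = (A^{D})^{j}(A^{D}AX) = (A^{D})^{j}X.
\end{equation*}
A straightforward induction then gives $X^{n} = (A^{D})^{j(n-1)}X$ for every $n \geq 1$. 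Folding $XA^{j} = A^{D}A$ into the triple product and using $A^{D}AX^{j} = X^{j}$, I obtain $X^{j}A^{j}X^{j} = X^{j-1}(XA^{j})X^{j} = X^{j-1}A^{D}AX^{j} = X^{j-1}X^{j} = X^{2j-1}$. Applying the induction formula twice, $X^{2j-1} = (A^{D})^{j(2j-2)}X = (A^{D})^{j(j-1)}\cdot(A^{D})^{j(j-1)}X = (A^{D})^{j(j-1)}X^{j}$, as desired. The main obstacle is spotting the multiplicative trick that produces $XA^{D}A = (A^{D})^{j+1}A$; without that closed-form relationship there is no evident route from the defining equations to a clean recursion for the powers $X^{n}$.
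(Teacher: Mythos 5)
Your proposal is correct and takes essentially the same route as the paper: both reduce everything to the closed form $X=A^{D}A(A^{j})^{\dagger}$, derive the recursion $X^{n+1}=(A^{D})^{j}X^{n}$ (hence $X^{j}=(A^{D})^{j(j-1)}X$), and use $XA^{j}=A^{D}A$ together with $A^{D}AX^{j}=X^{j}$ to collapse the triple product. Your reduction of $X^{j}A^{j}X^{j}$ to $X^{2j-1}$ is merely a slightly tidier arrangement of the paper's direct substitution.
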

\begin{proof}
By the definition of the $(j,k)$-core inverse,
we have $X=A^{D}AX$ and $A^{k}X=A^{k}(A^{j})^{\dagger}$.
By $X=A^{D}A(A^{j})^{\dagger}$, it is easy to check that $X^{n+1}=(A^{D})^{j}X^{n}$ for arbitrary $n\in \mathbb{N}$, which gives that
$X^{j}=(A^{D})^{j(j-1)}X.$
\begin{equation*}
\begin{split}
XA^{j}=&A^{D}A(A^{j})^{\dagger}A^{j}=(A^{D})^{j}A^{j}(A^{j})^{\dagger}A^{j}=(A^{D})^{j}A^{j}=A^{D}A;\\
X^{j}A^{j}X^{j}=&(A^{D})^{j(j-1)}XA^{j}X^{j}=(A^{D})^{j(j-1)}A^{D}A(A^{j})^{\dagger}A^{j}X^{j}\\
=&(A^{D})^{j(j-1)}(A^{D})^{j}A^{j}(A^{j})^{\dagger}A^{j}X^{j}=(A^{D})^{j(j-1)}(A^{D})^{j}A^{j}X^{j}\\
=&(A^{D})^{j(j-1)}A^{D}AXX^{j-1}=(A^{D})^{j(j-1)}A^{D}AA^{D}A(A^{j})^{\dagger}X^{j-1}\\
=&(A^{D})^{j(j-1)}A^{D}A(A^{j})^{\dagger}X^{j-1}=(A^{D})^{j(j-1)}XX^{j-1}\\
=&(A^{D})^{j(j-1)}X^{j}.
\end{split}
\end{equation*}
\end{proof}

\begin{corollary} \label{equavalentconaa}
Let $A, X\in \mathbb{C}^{n\times n}$ and ${\rm ind}(A)=k$.
If $X$ is the $(1,k)$-core inverse of $A$, then we have $XAX=X$ and $XA=A^{D}A$.
\end{corollary}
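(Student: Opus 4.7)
The plan is to specialize Theorem~\ref{equavalentcona} to the case $j=1$, or, equivalently, to compute directly from the explicit formula for $X$ supplied by Theorem~\ref{uniquemkcore}. Since $m = k = {\rm ind}(A)$ satisfies $m \geq {\rm ind}(A)$, Theorem~\ref{uniquemkcore}$(1)$ gives the closed form
$$
X = A^{\ominus}_{1,k} = A^{D} A (A^{1})^{\dagger} = A^{D} A A^{\dagger},
$$
and this is the identity I would use throughout.

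First I would verify $XA = A^{D}A$ in a single line: $XA = A^{D} A A^{\dagger} A = A^{D} A$, using only the Moore-Penrose axiom $A A^{\dagger} A = A$. This also matches the $j=1$ instance of the identity $XA^{j} = A^{D}A$ already proved in Theorem~\ref{equavalentcona}.

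Next, for $XAX = X$, I would combine the previous step with the Drazin identity $A^{D} A A^{D} = A^{D}$: starting from $XAX = (XA)X$, substitute to get $XAX = (A^{D}A)(A^{D} A A^{\dagger}) = (A^{D} A A^{D}) A A^{\dagger} = A^{D} A A^{\dagger} = X$. Note that this is exactly what the formula $X^{j} A^{j} X^{j} = (A^{D})^{j(j-1)} X^{j}$ of Theorem~\ref{equavalentcona} degenerates to when $j=1$, since the exponent $j(j-1)$ vanishes.

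I do not expect a genuine obstacle here; the whole argument is two short manipulations once the explicit form $X = A^{D} A A^{\dagger}$ is in hand. The only mild subtlety is that one should invoke Theorem~\ref{uniquemkcore}$(1)$ rather than re-derive the formula for $X$, and that the $j=1$ specialization of Theorem~\ref{equavalentcona} avoids dealing with $(A^{D})^{0}$ because no power of $X$ beyond the first ever appears.
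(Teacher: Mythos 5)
Your proposal is correct and matches the paper's intent: the corollary is stated without proof precisely because it is the $j=1$ specialization of Theorem~\ref{equavalentcona} (where the exponent $j(j-1)$ vanishes), and your direct verification from $X=A^{D}AA^{\dagger}$ using $AA^{\dagger}A=A$ and $A^{D}AA^{D}=A^{D}$ is exactly the computation the authors rely on. No issues.
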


The $(j,m)$-core inverse is a generalization of the core inverse and the
DMP-inverse in view of Theorem~\ref{equavalentcona}.

\begin{remark}
When $j=m=1={\rm ind}(A)$, the equations in $(\ref{kcoredefi})$ are equivalent to $XAX=X$, $XA=A^\#A$ and $AX=AA^{\dagger}$.
Thus $AX=AA^{\dagger}$ implies that $(AX)^{\ast}=AX$; $XA=A^\#A$ gives that $XA^{2}=A$ and $AXA=A$; and
$X=XAX=A^\#AX=AA^\#$, which means that $\rr{(X)}\subseteq \rr{(A)}$, then $X=AY$ for some $Y\in \mathbb{C}^{n\times n}$,
thus $X=AY=AXAY=AX^{2}$. Therefore, we have $\core{A}=X$ by Lemma~$\ref{threecore}$. In a word, the $(1,1)$-core inverse
coincides with the usual core inverse.
\end{remark}

\begin{remark} \label{ramarkmk}
If we let $j=1$ and $m={\rm ind}(A)$, then the equations in $(\ref{kcoredefi})$ are equivalent to
$XAX=X$, $XA=A^{D}A$ and $A^{k}X=A^{k}A^{\dagger}$ by Theorem~$\ref{equavalentcona}$.
Thus $(1,k)$-core inverse coincides with the DMP-inverse.
\end{remark}

From Remark~\ref{ramarkmk},  Theorem~\ref{equavalentcona} and the definition of the $(j,k)$-core inverse, we have the following theorem,
which says that the conditions $XAX=X$ and $XA=A^D A$ in the definition of the DMP-inverse can be replaced by $X=A^D AX$.

\begin{theorem}
Let $A\in \mathbb{C}^{n\times n}$ with $k={\rm ind}(A)$. Then $X\in \mathbb{C}^{n\times n}$ is the DMP-inverse of $A$
if and only if $X=A^D AX$ and $A^{k}X=A^{k}A^{\dagger}$.
\end{theorem}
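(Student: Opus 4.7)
The plan is to recognise that the two hypotheses $X = A^D A X$ and $A^k X = A^k A^\dagger$ are \emph{exactly} the defining system of the $(1,k)$-core inverse of $A$ (take $j=1$ and $m=k$ in Definition~\ref{kcoredef}). Thus the proof reduces to combining Theorem~\ref{uniquemkcore} and Corollary~\ref{equavalentconaa} with the definition of the DMP-inverse, with no new computation required.

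For the forward direction, I would assume $X = A^{D,\dagger}$ and verify the two conditions: the relation $A^k X = A^k A^\dagger$ is part of the definition of the DMP-inverse, and using $XA = A^D A$ together with $XAX = X$ gives
\[
A^D A X = (XA) X = XAX = X.
\]

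For the reverse direction, I would observe that the hypotheses say precisely that $X$ is the $(1,k)$-core inverse of $A$. Since $k = {\rm ind}(A)$, Theorem~\ref{uniquemkcore}(1) applies and guarantees that this $X$ exists and is unique. Invoking Corollary~\ref{equavalentconaa} (with the index $k$), we obtain $XAX = X$ and $XA = A^D A$. Combined with the given $A^k X = A^k A^\dagger$, these are the three defining properties of the DMP-inverse, so $X = A^{D,\dagger}$.

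There is no genuine obstacle; the statement is essentially a repackaging of Remark~\ref{ramarkmk}, and its content is that the two conditions $XAX = X$ and $XA = A^D A$ in the original definition of the DMP-inverse can be collapsed into the single, more compact equation $X = A^D A X$, at the cost of no extra hypothesis. The only minor care needed is to note that this collapse relies on the Drazin identity $A^D A \cdot A^D A = A^D A$, which is implicit in the idempotency behind Corollary~\ref{equavalentconaa}.
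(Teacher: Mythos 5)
Your proposal is correct and follows essentially the same route as the paper, which derives this theorem directly from Remark~\ref{ramarkmk}, Theorem~\ref{equavalentcona}/Corollary~\ref{equavalentconaa}, and the observation that the two stated conditions are precisely the defining system of the $(1,k)$-core inverse. The forward direction via $A^DAX=(XA)X=XAX=X$ and the reverse direction via the corollary match the paper's intended argument.
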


In the following theorem, we will give a canonical form for the $\pare{j}{k}$-core inverse of a matrix $A\in \mathbb{C}^{n\times n}$
by using the matrix decomposition in Theorem~\ref{CS}.

\begin{theorem} \label{csfoemula}
Let $A\in \mathbb{C}^{n\times n}$ have the form \emph{(\ref{cs})} with ${\rm ind}(A)=k$ and $j\in \mathbb{N}$. Then
\begin{equation} \label{usecsformula}
A^{\ominus}_{j,k}=U\mat{(MC)^{D}(MC)^{\ominus}_{j-1,k}}{0}{0}{0}U^{\ast}.
\end{equation}
\end{theorem}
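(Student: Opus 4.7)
The plan is to start from the formula $A^{\ominus}_{j,k} = A^D A (A^j)^\dagger$, given by Theorem~\ref{uniquemkcore}(1) since ${\rm ind}(A)=k$, and to compute each factor in the block form~(\ref{cs}). Set $B = MC$ and $Z = B^{j-1}M$. I first simplify the claimed right-hand side: the proof of Theorem~\ref{csfoemulajj} establishes ${\rm ind}(B)\le k-1$, so Theorem~\ref{uniquemkcore}(1) applied to $B$ yields $(MC)^{\ominus}_{j-1,k} = B^D B (B^{j-1})^\dagger$. Left-multiplying by $B^D$ and invoking the Drazin identity $(B^D)^2 B = B^D$ collapses this to $(MC)^D(MC)^{\ominus}_{j-1,k} = B^D(B^{j-1})^\dagger$, so the goal becomes $A^D A (A^j)^\dagger = U\mat{B^D(B^{j-1})^\dagger}{0}{0}{0}U^{\ast}$.

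For the block computation, $A^j = U\mat{B^j}{B^{j-1}MS}{0}{0}U^{\ast}$ factors as $U\vv{Z}{0}[C \ | \ S]U^{\ast}$. Since $[C \ | \ S][C \ | \ S]^{\ast} = C^2+SS^{\ast} = I_r$, the general identity $X^\dagger = X^{\ast}(XX^{\ast})^\dagger$ gives $(A^j)^\dagger = U\mat{CZ^\dagger}{0}{S^{\ast}Z^\dagger}{0}U^{\ast}$. Multiplying by $A$ and simplifying $MC\cdot C + MS\cdot S^{\ast} = M(C^2+SS^{\ast}) = M$ yields $A(A^j)^\dagger = U\mat{MZ^\dagger}{0}{0}{0}U^{\ast}$, and a further multiplication by $A^D$ produces $A^D A(A^j)^\dagger = U\mat{B^D MZ^\dagger}{0}{0}{0}U^{\ast}$ (the $(1,2)$ block of $A^D$ is annihilated by the zero bottom row of $A(A^j)^\dagger$). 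It therefore suffices to verify $B^D MZ^\dagger = B^D(B^{j-1})^\dagger$.

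Since ${\rm ind}(B)\le k-1$, one has $\kk(B^D) = \kk(B^{k-1})$, so $B^D X = B^D Y$ is equivalent to $B^{k-1}X = B^{k-1}Y$, and it is enough to check $B^{k-1}MZ^\dagger = B^{k-1}(B^{j-1})^\dagger$. I split into two cases. If $j\le k$, write $B^{k-1} = B^{k-j}B^{j-1}$; then $B^{k-1}MZ^\dagger = B^{k-j}(B^{j-1}M)Z^\dagger = B^{k-j}ZZ^\dagger = B^{k-j}P_{\rr(B^{j-1})}$, using $\rr(Z)=\rr(B^{j-1})$ (from invertibility of $M$), and analogously $B^{k-1}(B^{j-1})^\dagger = B^{k-j}P_{\rr(B^{j-1})}$. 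If $j>k$, iterating $B^D B^{\ell+1} = B^\ell$ (valid for $\ell\ge{\rm ind}(B)$) gives $B^{k-1} = (B^D)^{j-k}B^{j-1}$, and the identical computation with $(B^D)^{j-k}$ in place of $B^{k-j}$ concludes.

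The main obstacle is this final case-analysis: although each sub-computation is short, it requires exploiting ${\rm ind}(B)\le k-1$ in two separate ways — first to equate the null spaces of $B^D$ and $B^{k-1}$, then to express $B^{k-1}$ in terms of $B^{j-1}$ via either a positive power of $B$ (when $j\le k$) or a power of $B^D$ (when $j>k$). The unifying observation is the range identity $\rr(Z)=\rr(B^{j-1})$, coming solely from invertibility of $M$, which bridges the Moore--Penrose factor $Z^\dagger$ to the Drazin factor $B^D$ in the verification.
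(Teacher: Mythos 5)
Your proposal is correct, and it reaches the formula by a genuinely different computation than the paper's, even though both start from the same point, namely $A^{\ominus}_{j,k}=A^{D}A(A^{j})^{\dagger}$ (Theorem~\ref{uniquemkcore}) together with the decomposition (\ref{cs}) and the bound ${\rm ind}(MC)\leq k-1$ from the proof of Theorem~\ref{csfoemulajj}. The paper never computes $(A^{j})^{\dagger}$ itself: it reuses the normal-equations argument of Theorem~\ref{csfoemulajj} to get the block form of the orthogonal projector $A^{j}(A^{j})^{\dagger}$ directly, writes $A^{\ominus}_{j,k}=(A^{D})^{j}A^{j}(A^{j})^{\dagger}$, and lets the product telescope through the identity $[(MC)^{D}]^{j-1}(MC)^{j-1}=(MC)^{D}(MC)$. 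You instead compute the full Moore--Penrose inverse $(A^{j})^{\dagger}$ via the coisometry factorization $A^{j}=U\vv{Z}{0}[C\ |\ S]U^{*}$ with $[C\ |\ S][C\ |\ S]^{*}=I_r$, which is a nice, self-contained alternative to the normal-equations step; the price is the residual identity $B^{D}MZ^{\dagger}=B^{D}(B^{j-1})^{\dagger}$, which you settle correctly by comparing null spaces of $B^{D}$ and $B^{k-1}$ and splitting on $j\lessgtr k$. That case analysis can actually be avoided: since $(B^{D})^{j}B^{j-1}=B^{D}$ holds for every $j$ (immediate from the core--nilpotent form of $B$), one gets $B^{D}MZ^{\dagger}=(B^{D})^{j}ZZ^{\dagger}=(B^{D})^{j}B^{j-1}(B^{j-1})^{\dagger}=B^{D}(B^{j-1})^{\dagger}$ in one line, using only your range identity $\rr(Z)=\rr(B^{j-1})$ --- which is the same fact the paper uses when it shows $NN^{\dagger}=(MC)^{j-1}[(MC)^{j-1}]^{\dagger}$. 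The only blemish, shared with the paper's own statement, is that for $j=1$ the symbol $(MC)^{\ominus}_{0,k}$ falls outside Definition~\ref{kcoredef}; your argument, like the paper's, still gives the correct matrix $B^{D}$ in that block if one reads $(B^{0})^{\dagger}=I$.
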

\begin{proof}
By Theorem~\ref{uniquemkcore} and the idempotency of $A^{D}A$ we have
\begin{equation} \label{kcoreqeuaa}
A^{\ominus}_{j,k}=A^{D}A(A^{j})^{\dagger}=(A^{D})^{j}A^{j}(A^{j})^{\dagger}.
\end{equation}
From the proof of Theorem~\ref{csfoemulajj}, we have
\begin{equation} \label{kcoreqeuai}
A^{j}(A^{j})^{\dagger}=U\mat{(MC)^{j-1}((MC)^{j-1})^{\dagger}}{0}{0}{0}U^{\ast}.
\end{equation}
By (\ref{csdrazinforl}) we have
$A^{D}= U \mat{(MC)^{D}}{[(MC)^{D}]^{2}MS}{0}{0}U^*$, thus we have
\begin{equation} \label{kcoreqeuaj}
(A^{D})^{j}=U \mat{[(MC)^{D}]^{j}}{[(MC)^{D}]^{j+1}MS}{0}{0}U^*.
\end{equation}
By the proof of Theorem~\ref{csfoemulajj}, we have ${\rm ind}(MC)\leq k-1< k$.
From (\ref{kcoreqeuaa}), (\ref{kcoreqeuai}) and (\ref{kcoreqeuaj}), we have
\begin{equation*}
\begin{split}
A^{\ominus}_{j,k}=&(A^{D})^{j}A^{j}(A^{j})^{\dagger}\\
=&U \mat{[(MC)^{D}]^{j}}{[(MC)^{D}]^{j+1}MS}{0}{0}\mat{(MC)^{j-1}((MC)^{j-1})^{\dagger}}{0}{0}{0}U^*\\
=&U \mat{[(MC)^{D}]^{j}(MC)^{j-1}((MC)^{j-1})^{\dagger}}{0}{0}{0}U^*\\
=&U \mat{(MC)^{D}[(MC)^{D}]^{j-1}(MC)^{j-1}((MC)^{j-1})^{\dagger}}{0}{0}{0}U^*\\
=&U \mat{(MC)^{D}(MC)^{D}MC((MC)^{j-1})^{\dagger}}{0}{0}{0}U^*\\
=&U\mat{(MC)^{D}(MC)^{\ominus}_{j-1,k}}{0}{0}{0}U^{\ast}.
\end{split}
\end{equation*}
\end{proof}

\begin{remark}
If we use the decomposition of Hartwig and Spindelb\"{o}ck in \cite[Corollary 6]{HS}, then an expression of the $\pare{j}{k}$-core inverse of $A$
is $A^{\ominus}_{j,k}=U\mat{(\Sigma K)^{D}(\Sigma K)^{\ominus}_{j-1,k}}{0}{0}{0}U^{\ast}$, which is similar to the expression of $A^{\ominus}_{j,k}$ in Theorem~$\ref{csfoemula}$.
Since the proof of this result can be proved like the proof of
Theorem~~$\ref{csfoemula}$, we omit this proof.
\end{remark}

\begin{theorem} \label{csfoemulabb}
Let $A\in \mathbb{C}^{n\times n}$ and ${\rm ind}(A)=k$. If $(A^{k}X^{k})^{\ast}=A^{k}X^{k}$, $AX^{k+1}=X^{k}$ and $XA^{k+1}=A^{k}$, then
$A$ is (k,k)-core invertible and $A^{\ominus}_{k,k}=X^{k}.$
\end{theorem}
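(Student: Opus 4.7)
My plan is to verify directly that $X^k$ satisfies the two defining equations of the $(k,k)$-core inverse given in Definition~\ref{kcoredef}, namely $X^k = A^D A X^k$ and $A^k X^k = A^k (A^k)^\dagger$. The hypotheses $A X^{k+1} = X^k$ and $X A^{k+1} = A^k$ put us precisely in the setting of Lemmas~\ref{twedlema} and~\ref{drazinlema}, so all the algebraic identities I will need are already available.

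First I would apply Lemma~\ref{drazinlema} to obtain the compact formula $A^D = X^{k+1} A^k$. With this in hand, the verification that $X^k = A^D A X^k$ reduces to the chain $A^D A X^k = X^{k+1} A^{k+1} X^k = X^k A^k X^k$, where the last equality uses part~(4) of Lemma~\ref{twedlema}. Then part~(6) of that same lemma (specialized to $m=k$) gives $X^k A^k X^k = X^k$, closing this half.

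Next, to establish $A^k X^k = A^k (A^k)^\dagger$, I would use the fact that $A^k (A^k)^\dagger$ is the unique orthogonal projector onto $\mathcal{R}(A^k)$, and check that $A^k X^k$ has the three defining properties of such a projector. Hermiticity is given as a hypothesis. Idempotency follows from part~(6) of Lemma~\ref{twedlema}: $(A^k X^k)(A^k X^k) = A^k(X^k A^k X^k) = A^k X^k$. For the range, $\mathcal{R}(A^k X^k) \subseteq \mathcal{R}(A^k)$ is trivial, while the reverse inclusion follows from part~(5) of Lemma~\ref{twedlema}, which gives $A^k = A^k X^k A^k$ and hence $\mathcal{R}(A^k) \subseteq \mathcal{R}(A^k X^k)$. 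Combining these three facts identifies $A^k X^k$ with $A^k (A^k)^\dagger$.

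There is no real obstacle here beyond keeping the indices aligned: the entire argument is bookkeeping with the identities supplied by Lemmas~\ref{twedlema} and~\ref{drazinlema}. The only subtle point worth flagging is that one does have to invoke the Hermiticity hypothesis $(A^k X^k)^* = A^k X^k$ explicitly — without it, $A^k X^k$ would only be an oblique projector onto $\mathcal{R}(A^k)$, not the orthogonal one, and the second equation in~\eqref{kcoredefi} would fail.
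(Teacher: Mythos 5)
Your proof is correct. It uses the same two key lemmas as the paper (Lemmas~\ref{twedlema} and~\ref{drazinlema}) but assembles them differently: you verify the two defining equations of Definition~\ref{kcoredef} directly, exhibiting $X^k$ as a solution of the system and then invoking uniqueness (Theorem~\ref{uniquemkcorea}), whereas the paper first appeals to Theorem~\ref{uniquemkcore}(1) for the closed formula $A^{\ominus}_{k,k}=A^DA(A^k)^\dagger=(A^D)^kA^k(A^k)^\dagger$ and then evaluates it, replacing $(A^k)^\dagger$ by the $\{1,3\}$-inverse $X^k$ (via the standard identity $A^k(A^k)^{(1,3)}=A^k(A^k)^\dagger$, which needs exactly the Hermiticity hypothesis plus $A^kX^kA^k=A^k$) and finishing with a chain of power manipulations. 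Your handling of the second equation --- showing $A^kX^k$ is a Hermitian idempotent with range $\mathcal{R}(A^k)$, hence equal to $P_{\mathcal{R}(A^k)}=A^k(A^k)^\dagger$ --- is just a self-contained proof of that same $\{1,3\}$-inverse fact, so the mathematical content coincides; what your route buys is that it never needs the existence half of Theorem~\ref{uniquemkcore} (consistency is established by exhibiting the solution) and the computations $A^DAX^k=X^{k+1}A^{k+1}X^k=X^kA^kX^k=X^k$ are somewhat shorter than the paper's chain $(X^{k+1}A^k)^kA^kX^k=\cdots=X^k$. All index choices in your applications of parts (4), (5), (6) of Lemma~\ref{twedlema} check out, and your closing observation that Hermiticity is exactly what upgrades the oblique projector onto $\mathcal{R}(A^k)$ to the orthogonal one is the right thing to flag.
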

\begin{proof}
By Lemma~\ref{twedlema} and Lemma~\ref{drazinlema}, we have
$A^{k}X^{k}A^{k}=A^{k}$, $X^{k}A^{k}X^{k}=X^{k}$, $A^{k}=X^{k}A^{2k}$ and $A^{D}=X^{k+1}A^{k}$.
Equalities $(A^{k}X^{k})^{\ast}=A^{k}X^{k}$ and $A^{k}X^{k}A^{k}=A^{k}$ imply that $X^{k}$ is a $\{1,3\}$-inverse of $A^{k}$.
From $A^{D}=X^{k+1}A^{k}$, we can obtain $(A^{D})^{k}=X^{k-1}A^{D}$ by induction.
Thus
\begin{equation*}
\begin{split}
A^{\ominus}_{k,k}=&A^{D}A(A^{k})^{\dagger}=(A^{D})^{k}A^{k}(A^{k})^{\dagger}=(A^{D})^{k}A^{k}(A^{k})^{(1,3)}\\
=&(A^{D})^{k}A^{k}X^{k}=(X^{k+1}A^{k})^{k}A^{k}X^{k}=X^{k-1}X^{k+1}A^{k}A^{k}X^{k}\\
=&X^{2k}A^{2k}X^{k}=X^{k}(X^{k}A^{2k})X^{k}=X^{k}A^{k}X^{k}=X^{k}.
\end{split}
\end{equation*}
\end{proof}

\begin{proposition}\label{propjectormkcore}
Let $A\in \mathbb{C}^{n\times n}$ be a matrix with $j\geq {\rm ind}(A)=k$. If $A$ is $\pare{j}{k}$-core invertible, then $A^jA^{\ominus}_{j,k}$ is the
projector onto $\rr{(A^{j})}$ along $\rr{(A^{j})}^{\bot}$.
\end{proposition}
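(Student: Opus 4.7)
The plan is to reduce $A^j A^{\ominus}_{j,k}$ to the orthogonal projector $A^j (A^j)^\dagger$, since the orthogonal projector onto $\mathcal{R}(A^j)$ is by definition the projector onto $\mathcal{R}(A^j)$ along $\mathcal{R}(A^j)^{\perp}$.

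First I would recall from Theorem~\ref{uniquemkcore}(1) (applied with $m=k$, which is valid because $k = \operatorname{ind}(A) \geq k$) that
$$A^{\ominus}_{j,k} = A^D A (A^j)^{\dagger}.$$
Multiplying on the left by $A^j$ gives $A^j A^{\ominus}_{j,k} = A^j A^D A (A^j)^{\dagger}$.

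The key observation is that $j \geq k = \operatorname{ind}(A)$ forces the Drazin absorption $A^j A^D A = A^j$. Indeed, $A^j A^D A = A^{j-1}(A A^D A) = A^{j-1} A = A^j$, using the standard identity $A^{m} A^D A = A^{m}$ whenever $m \geq \operatorname{ind}(A)$ (which follows from $A^D A^{k+1} = A^k$ together with $A^D A = A A^D$, and induction on the exponent). Substituting yields
$$A^j A^{\ominus}_{j,k} = A^j (A^j)^{\dagger}.$$

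Finally, $A^j (A^j)^{\dagger}$ is a Hermitian idempotent whose range equals $\mathcal{R}(A^j)$ — this is one of the defining properties of the Moore-Penrose inverse. A Hermitian idempotent onto $\mathcal{R}(A^j)$ is exactly the projector onto $\mathcal{R}(A^j)$ along its orthogonal complement $\mathcal{R}(A^j)^{\perp}$, which completes the argument. There is no real obstacle here; the only point that needs attention is confirming the absorption step $A^j A^D A = A^j$ under the hypothesis $j \geq \operatorname{ind}(A)$, which is a routine consequence of the definition of the Drazin inverse.
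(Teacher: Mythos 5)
Your proof is correct and supplies exactly the computation the paper dismisses as ``trivial'': reduce $A^jA^{\ominus}_{j,k}$ to $A^j(A^j)^{\dagger}$ via $A^{\ominus}_{j,k}=A^DA(A^j)^{\dagger}$ and the absorption $A^jA^DA=A^j$ for $j\geq {\rm ind}(A)$, then observe that $A^j(A^j)^{\dagger}$ is the Hermitian idempotent with range $\mathcal{R}(A^j)$. One wording quibble: the intermediate equality $A^jA^DA=A^{j-1}(AA^DA)=A^{j-1}A$ as literally written invokes $AA^DA=A$, which fails when ${\rm ind}(A)>1$; the correct chain is $A^jA^DA=A^DA^{j+1}=A^DA^{k+1}A^{j-k}=A^kA^{j-k}=A^j$, which is what your parenthetical justification via $A^DA^{k+1}=A^k$ and commutativity actually establishes.
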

\begin{proof}
It is trivial.
%By Theorem~\ref{uniquemkcore} we have
%\begin{equation*}
%A^{j}A^{\ominus}_{j,k}A^{j}A^{\ominus}_{j,k}=A^{j}(A^{D})^{j}A^{j}(A^{j})^{\dagger}A^{j}A^{\ominus}_{j,k}=A^{j}(A^{D})^{j}A^{j}A^{\ominus}_{j,k}=A^{j}A^{\ominus}_{j,k}.
%\end{equation*}
%And $\rr{(A^{j}A^{\ominus}_{j,k})}=\rr{(A^{j}(A^{j})^{\dagger})}=\rr{(A^{j})}$ and $\kk{(A^{j}A^{\ominus}_{j,k})}=\kk{(A^{j}(A^{j})^{\dagger})}=\kk{((A^{\ast})^{j})}
%=\kk{((A^{j})^{\ast})}=\rr{(A^{j})}^{\bot}$.
\end{proof}

In the following proposition, we will investigate some properties of the $\pare{j}{k}$-core inverse.
\begin{proposition}\label{proposiofmkcore}
Let $A\in \mathbb{C}^{n\times n}$ with $j\geq {\rm ind}(A)=k$. If $A$ is $\pare{j}{k}$-core invertible, then
\begin{itemize}
\item[{\rm (1)}] $A^{\ominus}_{j,k}$ is a $\{1,2,3\}$-inverse of $A^{j}$;
\item[{\rm (2)}] $A^{\ominus}_{j,k}=(A^{D})^{j}P_{A^{j}}$;
\item[{\rm (3)}] $(A^{\ominus}_{j,k})^{n}=\left\{\begin{array}{l}
                   ((A^{D})^{j}(A^{j})^{\dag})^{\frac{n}{2}}\ \ \ \ \ \ \qquad n\mbox{~is~even}\\
                    A^{j}((A^{D})^{j}(A^{j})^{\dag})^{\frac{n+1}{2}}  \qquad n\mbox{~is~odd}
                 \end{array}
               \right.$;
\item[{\rm (4)}] $A^{\ominus}_{j,k}A^{D}=(A^{D})^{j+1}$;
\item[{\rm (5)}] $A^{j}A^{\ominus}_{j,k}=A^{\ominus}_{j,k}A^{j}$ if and only if $\rr{(A^{j})}^{\bot}\subseteq \kk{(A^{D})}$;
\item[{\rm (6)}] $A^{\ominus}_{j,k}=A$ implies that $A$ is EP.
\end{itemize}
\end{proposition}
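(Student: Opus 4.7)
The plan is to set $X := A^{\ominus}_{j,k}$ and exploit the closed forms
$X = A^{D} A (A^{j})^{\dagger} = (A^{D})^{j} A^{j} (A^{j})^{\dagger}$
supplied by Theorems~\ref{uniquemkcorea} and \ref{uniquemkcore}(1). Three identities coming from $j \geq k = \mathrm{ind}(A)$ will do essentially all of the work: (a) $A^{j}(A^{D})^{j} = AA^{D} = A^{D}A$ (by an easy induction on $j$); (b) $\rr(A^{j}) = \rr(A^{k}) = \rr(AA^{D})$, so $AA^{D}$ acts as the identity on $\rr(A^{j})$ (in particular $AA^{D}\cdot A^{j}(A^{j})^{\dagger} = A^{j}(A^{j})^{\dagger}$); and (c) $\rr(A^{D}) \subseteq \rr(A^{j})$, whence the orthogonal projector $A^{j}(A^{j})^{\dagger}$ fixes $A^{D}$.

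From here, parts (1), (2), (4), (5) are short. For (1), combine (a) and (b) to get $A^{j} X = A^{j}(A^{j})^{\dagger}$, which is Hermitian and immediately yields $A^{j} X A^{j} = A^{j}$; for the remaining axiom, note $X A^{j} = (A^{D})^{j} A^{j}(A^{j})^{\dagger} A^{j} = (A^{D})^{j} A^{j} = A^{D}A$, and couple this with the defining relation $X = A^{D}A X$ to get $X A^{j} X = A^{D}A X = X$. Part (2) merely rewrites the second closed form using $P_{A^{j}} = A^{j}(A^{j})^{\dagger}$. For (4), apply (c): $X A^{D} = (A^{D})^{j} A^{j}(A^{j})^{\dagger} A^{D} = (A^{D})^{j} A^{D} = (A^{D})^{j+1}$. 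For (5), the calculations above identify $A^{j} X = A^{j}(A^{j})^{\dagger}$ as the orthogonal projector onto $\rr(A^{j})$ and $X A^{j} = A^{D}A$ as the oblique projector onto $\rr(A^{k}) = \rr(A^{j})$ along $\kk(A^{D}) = \kk(A^{k})$; two projectors sharing a range coincide iff their kernels do, i.e., $\rr(A^{j})^{\bot} = \kk(A^{D})$, which a dimension count reduces to the one-sided inclusion $\rr(A^{j})^{\bot} \subseteq \kk(A^{D})$ appearing in the statement. Part (6) follows at once from the canonical form in Theorem~\ref{csfoemula}: equating $A = U\mat{MC}{MS}{0}{0}U^{\ast}$ with $X = U\mat{(MC)^{D}(MC)^{\ominus}_{j-1,k}}{0}{0}{0}U^{\ast}$ forces $MS = 0$, and the nonsingularity of $M$ gives $S = 0$, which is the EP characterization.

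The main obstacle is (3), whose two-case formula requires first pinning down $X^{2}$. I expect the key identity to be $X^{2} = (A^{D})^{j}(A^{j})^{\dagger}$, to be obtained by expanding $X\cdot X$ with the second closed form, rewriting $(A^{D})^{j} A^{j} = AA^{D}$ via (a), collapsing the middle factor $A^{j}(A^{j})^{\dagger} AA^{D} = AA^{D}$ via (b), and telescoping $(A^{D})^{j} AA^{D} = (A^{D})^{j}$ by repeated application of $A^{D} A A^{D} = A^{D}$. Once this is in hand, a trivial induction yields the even case $X^{2n} = ((A^{D})^{j}(A^{j})^{\dagger})^{n}$, and rewriting $X = A^{j}(A^{D})^{j}(A^{j})^{\dagger}$ via (a) and multiplying gives the odd case $X^{2n+1} = X \cdot X^{2n} = A^{j}((A^{D})^{j}(A^{j})^{\dagger})^{n+1}$.
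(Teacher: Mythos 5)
Your proposal is correct and follows essentially the same route as the paper: both rest on the closed form $A^{\ominus}_{j,k}=(A^{D})^{j}A^{j}(A^{j})^{\dagger}$, the key identity $(A^{\ominus}_{j,k})^{2}=(A^{D})^{j}(A^{j})^{\dagger}$ for part (3), and the canonical form of Theorem~\ref{csfoemula} for part (6). The only cosmetic difference is in part (5), where you phrase the condition as equality of two idempotents with common range (plus a dimension count) while the paper runs the equivalent chain of kernel inclusions directly.
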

\begin{proof}
$(1)$. By Theorem~\ref{uniquemkcore} we have $A^{\ominus}_{j,k}=A^{D}A(A^{j})^{\dagger}=(A^{D})^{j}A^{j}(A^{j})^{\dagger}$, thus
\begin{equation*}
\begin{split}
A^{j}A^{\ominus}_{j,k}A^{j}=&A^{j}(A^{D})^{j}A^{j}(A^{j})^{\dagger}A^{j}=A^{j}(A^{D})^{j}A^{j}=A^{j}A^{D}A=A^{j};\\
A^{\ominus}_{j,k}A^{j}A^{\ominus}_{j,k}=&(A^{D})^{j}A^{j}(A^{j})^{\dagger}A^{j}A^{\ominus}_{j,k}=A^{D}AA^{\ominus}_{j,k}\\
=&A^{D}AA^{D}A(A^{j})^{\dagger}=A^{D}A(A^{j})^{\dagger}=A^{\ominus}_{j,k};\\
A^{j}A^{\ominus}_{j,k}=&A^{j}(A^{D})^{j}A^{j}(A^{j})^{\dagger}=A^{j}(A^{j})^{\dagger}.
\end{split}
\end{equation*}

$(2)$ is trivial.

$(3)$. By $(A^{\ominus}_{j,k})^{2}=(A^{D})^{j}A^{j}(A^{j})^{\dagger}(A^{D})^{j}A^{j}(A^{j})^{\dagger}=(A^{D})^{j}(A^{j})^{\dagger}$ and induction
it is easy to check $(3)$.

$(4)$. $A^{\ominus}_{j,k}A^{D}=(A^{D})^{j}A^{j}(A^{j})^{\dagger}A^{D}=(A^{D})^{j}A^{j}(A^{j})^{\dagger}(A^{D})^{j}A^{j}A^{D}=(A^{D})^{j+1}$.

$(5)$.  By $\rr{(I_{n}-A^{j}(A^{j})^{\dagger})}=\kk{((A^{j})^{\dagger})}$ and $\kk{(A^{D}A)}=\kk{(A^{D})}$, we have
\begin{equation*}
\begin{split}
A^{j}A^{\ominus}_{j,k}=A^{\ominus}_{j,k}A^{j}~~
\Leftrightarrow&~~A^{j}(A^{D})^{j}A^{j}(A^{j})^{\dagger}=(A^{D})^{j}A^{j}(A^{j})^{\dagger}A^{j}\\
\Leftrightarrow&~~A^{j}(A^{D})^{j}A^{j}(A^{j})^{\dagger}=(A^{D})^{j}A^{j}\\
\Leftrightarrow&~~A^{j}(A^{D})^{j}(I_{n}-A^{j}(A^{j})^{\dagger})=0\\
\Leftrightarrow&~~\rr{(I_{n}-A^{j}(A^{j})^{\dagger})}\subseteq \kk{(A^{D}A)}\\
\Leftrightarrow&~~\kk{((A^{j})^{\dagger})}\subseteq \kk{(A^{D}A)}\\
\Leftrightarrow&~~\kk{((A^{j})^{\ast})}\subseteq \kk{(A^{D})}\\
\Leftrightarrow&~~\rr{(A^{j})}^{\bot}\subseteq \kk{(A^{D})}.
\end{split}
\end{equation*}

$(6)$. Let $A$ be written in the form $(\ref{cs})$.
We have $A^{\ominus}_{j,k}=U\mat{(MC)^{D}(MC)^{\ominus}_{j-1,k}}{0}{0}{0}U^{\ast}$ by Theorem~\ref{csfoemula}.
Thus, $A^{\ominus}_{j,k}=A$ implies $MS=0.$ From the nonsingularity of $M$, we have
$S=0$, which is equivalent to say that $A$ is EP in view of \cite[Theorem 3.7]{B}.
\end{proof}

In the following proposition, we shall give the the relationship between the $\pare{j}{k}$-core inverse and DMP-inverse and core-EP inverse.
\begin{proposition} \label{csfoemulabb}
Let $A\in \mathbb{C}^{n\times n}$ with ${\rm ind}(A)=k$. Then $A^{\ominus}_{k,k}=A^{D,\dagger}(A^{D})^{k-1}A\coep{A}.$
\end{proposition}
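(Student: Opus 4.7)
The plan is to transform the right-hand side $A^{D,\dagger}(A^{D})^{k-1}A\coep{A}$ directly into $(A^{D})^{k}A^{k}(A^{k})^{\dagger}$, which equals $A^{\ominus}_{k,k}$ by Theorem~\ref{uniquemkcore}(1) — indeed that theorem gives $A^{\ominus}_{k,k}=A^{D}A(A^{k})^{\dagger}$, and since $(A^{D})^{k}A^{k}=A^{D}A$ we can rewrite this as $A^{\ominus}_{k,k}=(A^{D})^{k}A^{k}(A^{k})^{\dagger}$. Two further ingredients will carry the bulk of the reduction: Lemma~\ref{procorp}, which identifies $A\coep{A}$ with $A^{k}(A^{k})^{\dagger}$, and the definition of the DMP-inverse $A^{D,\dagger}=A^{D}AA^{\dagger}$.

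The heart of the argument is the identity $A^{D,\dagger}(A^{D})^{k-1}A=(A^{D})^{k}A$. I would establish it by substituting $A^{D,\dagger}=A^{D}AA^{\dagger}$ and then using that $A$ commutes with every power of $A^{D}$ (which follows by a one-line induction from $AA^{D}=A^{D}A$) to move the rightmost $A$ next to $A^{\dagger}$. This produces the block $AA^{\dagger}A$, which collapses to $A$ by the Moore–Penrose relation. A second commutation then transports this surviving $A$ back to the right of $(A^{D})^{k-1}$, yielding $(A^{D})^{k}A$. Multiplying both sides by $\coep{A}$ and applying Lemma~\ref{procorp} gives $(A^{D})^{k}A^{k}(A^{k})^{\dagger}$, and one last appeal to Theorem~\ref{uniquemkcore}(1) identifies this quantity with $A^{\ominus}_{k,k}$.

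In schematic form the chain is
\[
A^{D,\dagger}(A^{D})^{k-1}A\coep{A}
\;\rightsquigarrow\;
A^{D}(AA^{\dagger}A)(A^{D})^{k-1}\coep{A}
\;\rightsquigarrow\;
(A^{D})^{k}A\coep{A}
\;\rightsquigarrow\;
(A^{D})^{k}A^{k}(A^{k})^{\dagger}=A^{\ominus}_{k,k}.
\]
There is essentially no real obstacle here: the proof is a short chain of algebraic identities, valid uniformly for all $k\geq 1$ with $k=\mathrm{ind}(A)$ (no separate treatment of $k=1$ is needed, since $(A^{D})^{0}=I$ causes no issue in the manipulation above). The only thing to watch is the bookkeeping of associativity and the use of $A(A^{D})^{k-1}=(A^{D})^{k-1}A$, which is immediate from $AA^{D}=A^{D}A$.
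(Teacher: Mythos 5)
Your proposal is correct and uses essentially the same ingredients as the paper's proof — Lemma~\ref{procorp} to replace $A\coep{A}$ by $A^{k}(A^{k})^{\dagger}$, the identity $A^{D,\dagger}=A^{D}AA^{\dagger}$, the commutativity $AA^{D}=A^{D}A$, and the collapse $AA^{\dagger}A=A$ — with the only difference being that you run the chain of equalities from the right-hand side to $A^{\ominus}_{k,k}$ whereas the paper starts from $A^{\ominus}_{k,k}=A^{D}A(A^{k})^{\dagger}$ and builds up the right-hand side. This is a cosmetic reversal, not a different method.
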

\begin{proof}
We have that $A^{k}(A^{k})^{\dagger}=A\coep{A}$ by Lemma~\ref{procorp} and $A^{D,\dagger}=A^DAA^\dag$.
Thus \begin{equation*}
\begin{split}
A^{\ominus}_{k,k}=&A^DA(A^{k})^{\dagger}=(A^D)^{k}A^k(A^{k})^{\dagger}=A^DA^k(A^D)^{k-1}(A^{k})^{\dagger}\\
=&A^DAA^\dag A^k(A^D)^{k-1}(A^{k})^{\dagger}=A^{D,\dagger}(A^{D})^{k-1}A^{k}(A^{k})^{\dagger}\\
=&A^{D,\dagger}(A^{D})^{k-1}A\coep{A}.
\end{split}
\end{equation*}
\end{proof}

In the following theorem, we will give a relationship between the $\bra{i}{m}$-core inverse and $\pare{j}{m}$-core inverse.
\begin{theorem} \label{refship}
Let $A\in \mathbb{C}^{n\times n}$ with ${\rm ind}(A)=k$. Then $A^{\oplus}_{k,m}=A^{\ominus}_{m,k}$ for any $m\geq k$.
\end{theorem}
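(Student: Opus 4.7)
The plan is to prove this by reducing both sides to the same closed-form expression via the explicit formulas already established.

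First, I would invoke Theorem~\ref{formulaj} to obtain the formula for the left-hand side: since $k = {\rm ind}(A)$, we certainly have $k \geq {\rm ind}(A)$, so
\[
A^{\oplus}_{k,m} = (A^D)^m A^k (A^k)^{\dagger}.
\]
Next, I would apply Theorem~\ref{uniquemkcore}(1) to the right-hand side. Here the role of the subscript ``$m$'' in Definition~\ref{kcoredef} is played by $k$, and since $k = {\rm ind}(A) \geq {\rm ind}(A)$, the hypothesis of Theorem~\ref{uniquemkcore}(1) is satisfied, giving
\[
A^{\ominus}_{m,k} = A^D A (A^m)^{\dagger}.
\]

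The remainder of the argument is a direct identification of these two expressions using two elementary observations. Because $m \geq k = {\rm ind}(A)$, one has $A^D A = (A^D)^m A^m$ (this is the standard identity for the Drazin inverse once the exponent reaches the index). Also, $A^k (A^k)^{\dagger}$ and $A^m (A^m)^{\dagger}$ are the orthogonal projectors onto $\mathcal{R}(A^k)$ and $\mathcal{R}(A^m)$ respectively; since $m \geq k = {\rm ind}(A)$, the chain of column spaces stabilises and $\mathcal{R}(A^m) = \mathcal{R}(A^k)$, so these two projectors coincide.

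Combining the two observations yields
\[
A^{\ominus}_{m,k} = A^D A (A^m)^{\dagger} = (A^D)^m A^m (A^m)^{\dagger} = (A^D)^m A^k (A^k)^{\dagger} = A^{\oplus}_{k,m},
\]
completing the proof. I do not anticipate a serious obstacle: the only point requiring minor care is bookkeeping the two different meanings of the subscripts in Definitions~\ref{kcoredefj} and \ref{kcoredef} so that the hypothesis $m \geq {\rm ind}(A)$ in Theorem~\ref{uniquemkcore}(1) is correctly matched to $k \geq {\rm ind}(A)$ when it is invoked for $A^{\ominus}_{m,k}$.
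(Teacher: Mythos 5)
Your proof is correct, but it follows a genuinely different and more elementary route than the paper's. The paper reduces both sides to the common expression $ML^{-m}M^{\dagger}$ by running the full-rank-factorization machinery of Remark~\ref{indexsameaa} and Theorem~\ref{indexsameformu}: it writes $A^{k}=MN$, $NM=L^{k}$, computes $(A^{D})^{k}A^{k}=ML^{-k}N$, and then pushes the Moore--Penrose formula (\ref{mpfullrank}) for $(A^{m})^{\dagger}$ through an explicit chain of cancellations. You instead take the two closed forms $A^{\oplus}_{k,m}=(A^{D})^{m}A^{k}(A^{k})^{\dagger}$ and $A^{\ominus}_{m,k}=A^{D}A(A^{m})^{\dagger}$ and identify them with two observations: $A^{D}A=(A^{D})^{m}A^{m}$ (which in fact holds for every $m\geq 1$, since $A^{D}A$ is idempotent and $A^{D}$ commutes with $A$ --- your parenthetical ``once the exponent reaches the index'' is unnecessarily restrictive, though harmless), and $A^{m}(A^{m})^{\dagger}=A^{k}(A^{k})^{\dagger}$ because both are orthogonal projectors onto the column spaces $\mathcal{R}(A^{m})=\mathcal{R}(A^{k})$, which have stabilised once the exponent reaches $\mathrm{ind}(A)$; the latter is exactly the argument the paper itself uses in Theorem~\ref{indexsame}. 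Your bookkeeping of the subscript conventions is also right: for $A^{\ominus}_{m,k}$ the second index is $k=\mathrm{ind}(A)$, so Theorem~\ref{uniquemkcore}(1) applies. Your approach is shorter, avoids the factorization entirely, and makes the reason for the coincidence transparent; the paper's computation has the side benefit of exhibiting the common value $ML^{-m}M^{\dagger}$ explicitly, which it reuses elsewhere.
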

\begin{proof}
By Theorem~\ref{uniquemkcore}, we have
$A^{\ominus}_{m,k}=A^DA(A^{m})^{\dagger}=(A^D)^{k}A^{k}(A^{m})^{\dagger}$.
By the proof of Remark~\ref{indexsameaa}, we have $A^{k}=MN$ and $NM=L^{k}$, where $M=\prod\limits_{l=1}^{k}B_{l}$, $N=\prod\limits_{l=1}^{k}G_{k+1-l}$ and $L=G_{k}B_{k}$.
It is easy to see that $(A^{D})^{s}=ML^{-k-s}N$ for any $s\in \mathbb{N}$ by $NM=L^{k}$. Thus $(A^{D})^{k}=ML^{-2k}N$ and
$$(A^{D})^{k}A^{k}=ML^{-2k}NMN=ML^{-2k}L^{k}N=ML^{-k}N.$$
By the proof of Remark~\ref{indexsameaa}, we have $A^{m}=ML^{m-k}N=M_{1}N$ is a full rank factorization of $A^m$, where $M_{1}= ML^{m-k}$ and
$(A^{m})^{\dagger}=N^\ast(NN^\ast)^{-1}(M_{1}^\ast M_{1})^{-1}(M_{1})^\ast$.
By Theorem~\ref{indexsameformu}, we have  $A_{k,m}^{\oplus}=ML^{-m}M^{\dagger}$. In the following steps, we will show that $A^{\ominus}_{m,k}=ML^{-m}M^{\dagger}$.
From $A^{\ominus}_{m,k}=(A^D)^{k}A^{k}(A^{m})^{\dagger}$, we have
\begin{equation*}
\begin{split}
A^{\ominus}_{k,m}=&(A^D)^{k}A^{k}(A^{m})^{\dagger}=ML^{-k}NN^\ast(NN^\ast)^{-1}(M_{1}^\ast M_{1})^{-1}(M_{1})^\ast\\
=&ML^{-k}(M_{1}^\ast M_{1})^{-1}(M_{1})^\ast
=ML^{-k}[(L^{m-k})^\ast M^{\ast}ML^{m-k}]^{-1}(L^{m-k})^\ast M^{\ast}\\
=&ML^{-k}L^{k-m}(M^{\ast}M)^{-1}[(L^{m-k})^\ast]^{-1}(L^{m-k})^\ast M^{\ast}\\
=& ML^{-m}(M^{\ast}M)^{-1}M^{\ast}=ML^{-m}M^{\dagger}.
\end{split}
\end{equation*}
\end{proof}

\begin{theorem}
Let $A\in \mathbb{C}^{n\times n}$ with $i\geq {\rm ind}(A)=k$, then $A^{\ominus}_{i,k}=P_{1}D^{-i}P_{1}^{\dagger}$, where
$A=P\mat{D}{0}{0}{N}P^{-1}$ with $D\in \mathbb{C}^{r\times r}$ is nonsingular, $N$ is nilpotent and $P=[P_{1} \ | \ P_{2}]$ with $P_{1}\in \mathbb{C}^{n\times r}$.
\end{theorem}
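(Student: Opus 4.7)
The plan is to mimic the strategy of Theorem~\ref{jodanbiao} but starting from the explicit formula for the $\pare{j}{k}$-core inverse rather than the one for the $\bra{k}{m}$-core inverse. Since $k={\rm ind}(A)$ and $i\geq k$, Theorem~\ref{uniquemkcore}(1) applies with the roles $j\leftrightarrow i$ and $m\leftrightarrow k$, yielding
$$
A^{\ominus}_{i,k}=A^{D}A(A^{i})^{\dagger}.
$$
So the whole proof reduces to evaluating $A^DA$ and $(A^i)^\dagger$ in terms of the Jordan-based block decomposition of $A$.

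Next I would fix the notation from the hypothesis: write $P=[P_1\ |\ P_2]$ with $P_1\in\mathbb C^{n\times r}$, and $P^{-1}=\lie{Q_1}{Q_2}$ with $Q_1\in\mathbb C^{r\times n}$. As in the proof of Theorem~\ref{jodanbiao}, one has $A^D=P\mat{D^{-1}}{0}{0}{0}P^{-1}$, and a direct multiplication gives
$$
A^DA=P\mat{I_r}{0}{0}{0}P^{-1}=P_1Q_1.
$$
For $(A^i)^\dagger$ I would use that $N$ is nilpotent of index at most $k$ and $i\geq k$, so $N^i=0$ and hence $A^i=P\mat{D^i}{0}{0}{0}P^{-1}=(P_1D^i)Q_1$; since $P_1D^i$ has full column rank $r$ and $Q_1$ has full row rank $r$, this is a full rank factorization. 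Applying (\ref{mpfullrank}) as in Theorem~\ref{jodanbiao} yields
$$
(A^i)^\dagger=Q_1^\dagger D^{-i}P_1^\dagger.
$$

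Putting it together, $A^{\ominus}_{i,k}=P_1Q_1Q_1^\dagger D^{-i}P_1^\dagger$. The only thing left to observe is that $Q_1$ has full row rank, so $Q_1Q_1^\dagger=I_r$, giving the stated formula $A^{\ominus}_{i,k}=P_1D^{-i}P_1^\dagger$. I do not expect any serious obstacle: the main technical point is verifying that $A^i=(P_1D^i)Q_1$ really is a full rank factorization (which requires precisely the hypothesis $i\geq k$ so that the nilpotent block vanishes), after which everything is bookkeeping already carried out in Theorem~\ref{jodanbiao}.
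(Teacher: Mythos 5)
Your proof is correct, but it follows a different route from the paper's. The paper disposes of this theorem in one line by combining Theorem~\ref{refship} (which gives $A^{\ominus}_{i,k}=A^{\oplus}_{k,i}$ for $i\geq k$) with Theorem~\ref{jodanbiao} (which gives $A^{\oplus}_{k,i}=P_1D^{-i}P_1^{\dagger}$); the real work is thus hidden in the full-rank-factorization computation behind Theorem~\ref{refship}. You instead bypass Theorem~\ref{refship} entirely and evaluate $A^{\ominus}_{i,k}=A^{D}A(A^{i})^{\dagger}$ (legitimately obtained from Theorem~\ref{uniquemkcore}(1), since $m=k={\rm ind}(A)$) directly in the Jordan-based decomposition, repeating the pattern of the proof of Theorem~\ref{jodanbiao}: $A^{D}A=P_1Q_1$, $A^{i}=(P_1D^{i})Q_1$ is a full rank factorization because $N^{i}=0$ when $i\geq k$, formula (\ref{mpfullrank}) gives $(A^{i})^{\dagger}=Q_1^{\dagger}D^{-i}P_1^{\dagger}$, and $Q_1Q_1^{\dagger}=I_r$ finishes the computation. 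Each of these steps checks out. What your approach buys is self-containedness and transparency: it avoids the $B_l,G_l$ factorization machinery underlying Theorem~\ref{refship} and makes explicit exactly where the hypothesis $i\geq{\rm ind}(A)$ is used (the vanishing of the nilpotent block in $A^{i}$). What the paper's approach buys is brevity and the added insight that the two generalized inverses coincide, $A^{\ominus}_{i,k}=A^{\oplus}_{k,i}$, which your argument does not by itself reveal.
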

\begin{proof}
It is easy to see that by Theorem~\ref{jodanbiao} and Theorem~\ref{refship}.
\end{proof}

\centerline {\bf ACKNOWLEDGMENTS} This research is supported by the National Natural Science Foundation of China (No. 11371089 and No. 11471186), the Natural Science Foundation of Jiangsu Province (No. BK20141327). The first author is grateful to China Scholarship Council for giving him a purse for his further study in Universitat Polit\`{e}cnica de Val\`{e}ncia, Spain.

\end{document}